\newtheorem{theorem}{Theorem}[section]
\newtheorem{lemma}{Lemma}[section]
\newtheorem{remark}{Remark}[section]
\newtheorem{example}{Example}
\newtheorem{definition}{Definition}[section]
\newcommand{\eq}[1]{\begin{align}#1\end{align}}
\newcommand{\eqn}[1]{\begin{align*}#1\end{align*}}
\newcommand{\sss}{\smallskip}
\newcommand{\ii}{\mathbf{i}}
\newcommand{\oo}{\mathcal{O}}
\newcommand{\DD}{\mathcal{D}}
\newcommand{\dd}{\;\mathbf{d}}
\newcommand{\D}{\;\mathrm{d}}
\newcommand{\pe}{\Phi_e}
\newcommand{\po}{\Phi_o}
\newcommand{\xx}{\bm{x}}
\newcommand{\Aga}{\mathcal{A}_\gamma}
\numberwithin{theorem}{section}
\numberwithin{equation}{section}
\numberwithin{figure}{section}
\colorlet{inlinkcolor}{green!50!black}
\colorlet{exlinkcolor}{red!50!black}
\title{Dispersion Analysis of CIP-FEM for Helmholtz Equation}
\author{Yu Zhou}
\address[A1]{Department of Mathematics, Nanjing University, Jiangsu,
	210093, P.R. China}
\email{zhouyu524@hotmail.com}
\author{Haijun Wu}
\address[A2]{Department of Mathematics, Nanjing University, Jiangsu,
	210093, P.R. China}
\email{hjw@nju.edu.cn}
\thanks{This work was partially supported by the NSF of China under grants 12171238 and 11525103.}
\begin{document}
\maketitle

\begin{abstract}
	When solving the Helmholtz equation numerically, the accuracy of numerical solution deteriorates as the wave number $k$ increases, known as `pollution effect' which is directly related to the phase difference between the exact and numerical solutions, caused by the numerical dispersion. In this paper, we propose a dispersion analysis for the continuous interior penalty finite element method (CIP-FEM) and derive an explicit formula of the penalty parameter for the $p^{\rm th}$ order CIP-FEM on tensor product (Cartesian) meshes, with which the phase difference  is reduced from $\mathcal{O}\big(k(kh)^{2p}\big)$ to $\mathcal{O}\big(k(kh)^{2p+2}\big)$. Extensive numerical tests show that the pollution error of the CIP-FE solution is also reduced by two orders in $kh$ with the same penalty parameter.
	
	\smallskip
	\noindent\textbf{Keywords}: dispersion analysis, tensor product meshes, CIP-FEM, penalty parameter
\end{abstract}

\section{Introduction}
In many physical applications, such as electromagnetic wave and acoustic scattering problems, are often governed by the Helmholtz equation
\begin{align}
	-\Delta u-k^2u&=f,\quad in\;\;\Omega,\label{eq}\\
	\frac{\partial u}{\partial\mathbf{n}}+\ii ku&=g,\quad on\; \partial\Omega, \label{robin}
\end{align}
where $\Omega\subset\mathbb{R}^d(d=1,2,3)$ is a bounded polygonal/polyhedral domain, $f$ is a given function representing a bounded source of energy, $k>0$ is a constant called the wave number, $\ii=\sqrt{-1}$ denotes the imaginary
unit and $\mathbf{n}$ represents the unit outward normal to $\partial\Omega$. The Robin boundary condition \eqref{robin} is known as the first-order approximation of the following Sommerfeld radiation condition \cite{1979CPApM}. 
$$\lim_{r\rightarrow\infty}r^\frac{d-1}{2}\left(\frac{\partial u}{\partial r}+\ii k u\right)=0,\quad\text{where } r=|x|.$$
Here, it is assumed that the time-harmonic field is $e^{\ii\omega t}$, if the time-harmonic field is instead $e^{-\ii\omega t}$, one should replace $\ii$ with $-\ii$ in the Sommerfeld radiation condition. We remark that the Helmholtz problem \eqref{eq}--\eqref{robin} also arises in applications as a consequence of frequency domain treatment of attenuated scalar waves \cite{1994spacefreq}.

When solving the Helmoholtz equation numerically with classical finite element method, the accuracy of numerical solution deteriorates as the wave number $k$ increases, this effect is what we call `pollution effect' \cite{123d,hp1,hp2}. It arises since the discrete solution fails to propagate waves at the correct speed, resulting in a phase lead/lag in numerical approximation, known as `dispersion'\cite{dispersion}.

Numerical dispersion refers to the difference between exact wave number $k$ and discrete wave number $k_h$, it is widely used in assessing the quality of a numerical scheme. Plenty numerical experiments have shown that the pollution effect is directly related to dispersion, to be more specific, they are of the same convergence order. Though the theoretical proof of association between numerical accuracy and phase difference has been obtained only in limited circumstances, measuring and controlling the numerical dispersion is still of practical significance. 

A method to measure the dispersion on any numerical method is presented in \cite{123d} where the discrete wave number $k_h$ is defined as the solution to a nonlinear equation obtained by some local Fourier analysis. Another definition of $k_h$ is introduced in \cite{mazzieri2012dispersion} where $k_h^2$ an eigenvalue of a Hermitian and positive definite  matrix related to the stiffness matrix of FEM. The explicit form of discrete dispersion relationships for classical finite element mothod (FEM), discontinuous Galerkin finite element discretisation (DGFEM), spectral element method and high-Order N\'{e}d\'{e}lec/edge element approximation are proposed in \cite{Ainsworth2010Explicit,MA2,MA1,MA3}.

Many attempts have been presented in the literature to eliminate/reduce `pollution error' (dispersion error).
\cite{RFbubble,RFbubble2} proposed the `residual free' bubble approach (RF-bubble). \cite{GLS,Harari1991Finite} applied the Galerkin least-squares technology (GLS-FEM)
to the Helmholtz equation, by introducing a local mesh parameter into the variational equation, accurate solutions with relatively coarse meshes was produced. In \cite{2021SoftFEM}, softFEM method was newly coined to reduce the stiffness of the discrete spectral problem. \cite{general,pf} introduced a generalization of the FEM (GFEM), this method covers practically all modifications of the FEM which lead to a sparse system matrix. In one-dimensional case, there exists a pollution-free GFEM solution which is coincide with the best approximation, however, in high dimensional cases, there always exists an equation whose discrete solution contains a pollution term. The paper also derived an effective method for 2D problem (QSFEM), it improves the solution significantly but is also very complicated in general settings.

Our research is based on the continuous interior penalty finite element method (CIP-FEM), which was first proposed by Douglas and Dupont \cite{Douglas} in 1970s to solve elliptic and parabolic problems.  The CIP-FEM uses the same approximation space as that of the FEM but modifies its bilinear form by adding a least squares term penalizing the jump of the gradient of the discrete solution at mesh interfaces, which was also recognized as a stabilization technique \cite{burman2005,burman2007continuous}. Recently, the CIP-FEM has shown great potential in solving wave scattering problems in high frequency \cite{p2cip,cip1,cip2,cip3,liwu2019,wuzou2018}, due to its good stability property and its capability to greatly reduce the pollution errors by tuning the penalty parameters. 

For one-dimensional problems with linear CIP-FEM, it is proved that the relative $H^1$ error of discrete solution $u_h$ could be bounded by best approximation and phase difference \cite{p2cip}, i.e.,
\begin{equation*}\label{cip1}
	\|u-u_h\|_{H^1}\lesssim kh+|k-k_h|,\quad \text{ if }kh\leq 1.
\end{equation*}
In other words, the pollution error  could be bounded only by the phase difference  $|k-k_h|$.
However, the rigorous mathematical proofs of this estimation for high order methods and multi-dimensional cases still remain vague.

In two and three dimensions, the pre-asymptotic error analysis  of CIP-FEM is given in \cite{cip1,cip2,cip3}. 
\begin{equation}\label{error}
	\|u-u_h\|_{H^1}\lesssim (kh)^p+k(kh)^{2p}, \quad \text{if } k(kh)^{2p}\text{ is sufficiently small}
\end{equation}
where $p$ is the order of approximation space.  The first term in \eqref{error} is the local error and the second term is the pollution error which is of the same order as the phase difference. By selecting appropriate penalty parameter the `pollution effect' could be eliminated in 1D and  largely reduced in 2D \cite{cip1,cip3}. However, searching for appropriate penalty parameters 
involves massive calculations, especially for multidimentional cases and high order finite element schemes.

The dispersion analysis for classical FEM ($hp$-version) has been done by Mark Ainsworth \cite{MA1}, where the following explicit characterization of the phase difference for elements of arbitrary order is derived:
\begin{align*}\label{ma1}
	|k-k_h|=\frac 12\left[\frac{p!}{(2p)!}\right]^2\frac{k^{2p+1}h^{2p}}{(2p+1)}+\oo(k^{2p+3}h^{2p+2}).
\end{align*}
In this research, the dispersion relation is first obtained by decoupling the nodal and interior degrees of freedom through Gaussian elimination or static condensation \cite{dispersion,hp2} and then expressed explicitly in terms of Pad\'e approximants. However, this approach fails in CIP-FEM since the penalty terms cause the nodal and interior degrees of freedom can not be decoupled.

The purpose of this paper is to conduct the dispersion analysis for the CIP-FEM on tensor product (Cartesian) meshes with the interior penalty term involving only the jumps of $p^{\rm th}$ normal derivative. We use the method developed in \cite{123d} to measure the dispersion and use the same idea of static condensation used in \cite{MA1} to do some simplification. While the result dispersion relation for the CIP-FEM is still more complicated than that of FEM \cite{MA1}, due to the difficulty caused by non-decoupling. 
Some subtle and tedious manipulation yields the following  characterization of the phase difference for the $p^{\rm th}$ order CIP-FEM in $\mathbb{R}^d (d=1,2,3)$.
\begin{align*}
	|k-k_h|=\frac 12\bigg(\frac{1}{(2p+1)}\left[\frac{p!}{(2p)!}\right]^2+\gamma\bigg)k^{2p+1}h^{2p}+\oo(k^{2p+3}h^{2p+2}),
\end{align*}
where $\gamma$ is the penalty parameter. Therefore by taking 
\begin{align*}
	\gamma=\gamma_0:=-\frac{1}{(2p+1)}\left[\frac{p!}{(2p)!}\right]^2,
\end{align*}
the phase difference may be reduced from $\oo\big(k(kh)^{2p}\big)$ to $\oo\big(k(kh)^{2p+2}\big)$. 
Note that adding penalty terms on jumps of derivatives lower than $p$ (for $p\ge 2$) may reduce further the phase error \cite{cip3}, while explicit formulas for the penalty parameters are not easy to find for general $p$. We will investigate this in a future work.

The rest of the paper is organized as follows. In \S 2, we address the model problem and the definition of discrete wave number. \S 3 is devoted to the dispersion analysis for CIP-FEM in one-dimensional case. The dispersion analysis is  then extended to two- and three-dimensional cases in \S 4. Some numerical results are given in \S 5 to verify the theoretical findings.
Throughout this paper, let $C$ denote a generic positive constant which is independent of $k, h, f, g$, which may have different values in different occasions.

\section{CIP-FEM and discrete wave number}
In this section, we introduce the formulation of the CIP-FEM and the definition of the discrete wave number.

\subsection{CIP-FEM}
We start from the Helmholtz equation in $\mathbb{R}^d$
\begin{equation}\label{equation}
	-\Delta u-k^2u=f\quad \text{ in }\mathbb{R}^d,
\end{equation}
where $k=\frac{2\pi}{\lambda}$ is the wave number describing how many oscillations a wave completes per unit of space, $f$ is a source function. Since the goal of this analysis is to derive the dispersion 
relations, we make several assumptions \cite{mazzieri2012dispersion}. We assume that the medium occupies an unbounded region which is isotropic (i.e., looking the same in all directions), homogeneous (i.e., the same at each place) and source free (i.e., $f\equiv 0$). Moreover, it follows logically to assume $u\rightarrow 0$ for all $|\bm{x}|\rightarrow\infty$. Under these assumptions, by taking a dot product of \eqref{equation} with a sufficient smooth test function $v$ of compact support, integrating over $\mathbb{R}^d$ on both sides and applying Green's formula, we come to the variational form
\begin{equation*}
	\mathcal{A}(u,v):=(\nabla u,\nabla v)- k^2( u,v)=0,
\end{equation*}
where $(\cdot,\cdot)$ denotes the $L^2$ inner product on $L^2(\mathbb{R}^d)$. 

To obtain the CIP-FEM scheme of \eqref{equation}, we introduce the following notations \cite{cip1,cip2,cip3}.

Suppose $\mathbb{R}^d$ is decomposed into non-overlapping d-cube (a $d$-dimensional cube degenerates to a line segment in 1D and a square in 2D) elements $\{K_i\}_{i\in I}$ with equal size $h$, denoted by $\mathcal M_h$. Let
\begin{align*}
	\mathcal{E}_h&:=\text{the set of all $(d-1)$-faces of $d-$cube  elements in}  \mathcal{M}_h,\\
	\mathbb{Q}_p&:=\text{the set of all polynomials with degree $\le$ $p$ in each variable},\\
	V_h&:=\left\{v_h\in H^1_{loc}(\mathbb{R}^d):\; v_h|_K\in\mathbb{Q}_p,\;\forall K\in \mathcal{M}_h\right\},\\
	\mathcal{N}_h&:=\text{the set of all global nodes of the finite element space  $V_h$},\\
	\Phi\;&:=\Big\{\phi_{\bm{x}}\in \mathbb{Q}_p:\; \phi_{\bm{x}}(\bm{x})=1,\; \phi_{\bm{x}}(\bm{x'})=0,\; \forall  \bm{x}\neq\bm{x}'\in \mathcal{N}_h \Big\}.
\end{align*}
Set the penalty term as
$$	J(u,v):=\sum_{e\in\mathcal{E}_h}\gamma h^{2p-1}\int_e\left[\frac{\partial^p u}{\partial \mathbf{n}^p}\right] \left[\frac{\partial^p v}{\partial \mathbf{n}^p}\right],$$
where $\gamma$ is the penalty parameter, the jump $[v]$ of $v$ on an interior face $e=\partial K_1\cap\partial K_2\in\mathcal{E}_h$ is defined by
$$[v]\vert_e:=v|_{K_1}\cdot\mathbf{n}_{K1}+v|_{K_2}\cdot\mathbf{n}_{K2},$$
$\mathbf{n}_{K_i}$ is the unit outward normal  towards $\partial K_i$.

Note that $J(u, v) = 0$ if $u\in H^{p+1}(\mathbb{R}^d)$ is a solution to \eqref{equation}, thus there still holds
\begin{equation}\label{vp1}
	\Aga(u,v):=\mathcal{A}(u,v)+J(u,v)=(\nabla u,\nabla v)- k^2( u,v)+J(u,v)=0,
\end{equation}

By analogy with the continuous problem, the CIP-FE solution  $u_h \in V_h$ satisfies (see e.g. \cite{cip3})
\begin{equation}\label{cip_fem}
	\Aga(u_h,v_h):=(\nabla u_h,\nabla v_h)- k^2(u_h,v_h)+J(u_h,v_h)=0,\quad\forall v_h\in V_h.
\end{equation}

\begin{remark}
	{\rm (a)}  The
	CIP-FEM was first proposed by Douglas and Dupont \cite{Douglas} for elliptic and parabolic problems in the 1970s and then successfully applied to con-vection-dominated problems as a stabilization technique \cite{burman2005,burman2007continuous}.
	
	{\rm (b)} By choosing appropriate penalty parameter, the pollution error could be
	eliminated in one dimension  and largely reduced in two or more dimensions \cite{cip2,cip3}. Moreover, the scheme is absolutely stable if the penalty parameter is a complex number with positive imaginary part \cite{cip1}. While in the dispersion analysis of this paper, for simplicity, we assume that $\gamma$ is real.
	If $\gamma=0$, the CIP-FEM scheme becomes the classical FEM discretization. 
	
	{\rm (c)} Compared to the discontinuous Galerkin
	methods \cite{feng2011hp,feng2009discontinuous} and  hybridizable discontinuous Galerkin method \cite{chen2013hybridizable}, the CIP-FEM involves fewer degrees of freedom (DOF), and thus reduce the computational cost.
	
	{\rm (d)} Compared to the $p^{\rm th}$ order CIP-FEM proposed in \cite{cip3}, we take only the penalty term on the jumps of highest order normal derivative and omit the penalty terms on jumps of lower order normal derivatives. Although more penalty terms can help to reduce further the phase error and the pollution effect, explicit formulas for the penalty parameters are not easy to find. We leave this to the future investigation.
\end{remark}

\subsection{Discrete wave number}
It is clear that the homogeneous Helmholtz equation
\begin{equation}\label{homoequation}
	-\Delta u-k^2u=0,
\end{equation}
admits a plane wave solution in the form of
$$u(\bm{x})=Ae^{\ii \bm{k}\cdot\bm{x}},$$
if $\bm{k}=(k_1,\cdots,k_d)$ satisfies the following dispersion relationship
$$k=|\bm{k}|,$$
furthermore, the exact solution $u(\bm{x})$ is a Bloch wave \cite{bloch} satisfying
\eq{\label{bloch} u(\bm{x}+\bm{m}h)=e^{\ii \bm{m}\cdot\bm{k}h}u(\bm{x}),\quad \forall \bm{m}\in \mathbb{Z}^d.}

In order to define the discrete wave number and carry out the dispersion analysis of CIP-FEM, we first introduce the definition of the generating set of global nodes of a finite element space on a tensor product mesh as follows.
\begin{definition}[Generating Set]\label{def_generate}
Let $\mathcal{N}_h$ defined as above,
	we say that two nodes $\bm{x}, \bm{y}\in\mathcal{N}_h$ are equivalent and denoted by $\bm{x}\sim\bm{y}$ if $\bm{x}-\bm{y}\in h \mathbb{Z}^d$.  We call a subset $\mathcal{X}_g\subset\mathcal{N}_h$ a generating set of $\mathcal{N}_h$ if {\rm (i)} any node in $\mathcal{N}_h$ is equivalent to a certain node in $\mathcal{X}_g$; {\rm (ii)} any two nodes in $\mathcal{X}_g$ are not equivalent.
\end{definition}
\begin{remark}
	{\rm (a)} It is clear that   $\mathcal{X}_g$ contains $p^d$ nodes.
	
	{\rm (b)} The generating set is not unique. For example, if $\mathcal{X}_g$ is a generating set of $\mathcal{N}_h$, the set obtained by replacing any node in $\mathcal{X}_g$ by one of its equivalent nodes is still a generating set of $\mathcal{N}_h$ (see Figure~\ref{2dgs}).
	
	{\rm (c)} The definition of generating set may be extended to other translation-invariant meshes (e.g. equilateral triangulations in 2D \cite{2dperiod} and tetrahedral meshes in 3D\cite{3dlinear}).
\end{remark}
\begin{figure}
	\centering
	\includegraphics[scale=0.75,trim={0cm 1cm 0cm 0}]{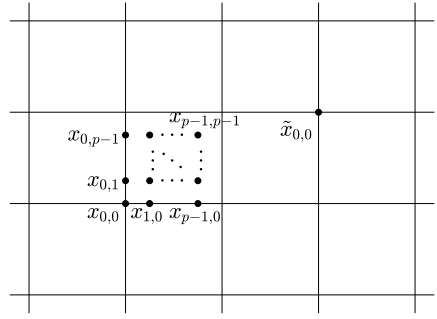}
	\caption{Illustration of generating sets on the 2D tensor product mesh. Both   $\mathcal{X}_g=\{\bm{x}_{0,0},\bm{x}_{0,1},\cdots,\bm{x}_{p-1,p-1}\}$ and $\tilde{\mathcal{X}}_g=\{\tilde{\bm{x}}_{0,0},\bm{x}_{0,1},\cdots,\bm{x}_{p-1,p-1}\}$ are generating sets of $\mathcal{N}_h$.}
	\label{2dgs}
\end{figure}

We apply the method developed in \cite{123d} to measure the dispersion. Since the mesh is translation-invariant, we consider only the equations associated to the generating set. Denote by $n_g:=\#\mathcal{X}_g=p^d$. Write
\eq{\label{NX}\mathcal{N}_h=\{\bm{x}_1,\bm{x}_2,\cdots\}\quad\text{and}\quad \mathcal{X}_g=\{\bm{x}_g^1,\bm{x}_g^2,\cdots,\bm{x}_g^{n_g}\}.}
clearly, the CIP-FE solution may be expressed  as 
\eqn{u_h=\sum_{\bm{x}_j\in\mathcal{N}_h}U_j\phi_{\bm{x}_j}, \quad U_j=u_h(\xx_j).}
For any $\bm{x}_g^s\in \mathcal{X}_g$ and $\bm{x}_j\in\mathcal{N}_h$, denote that 
\eqn{
	D_j^s&=\Aga(\phi_{\xx_j},\phi_{\xx_g^s})=(\nabla \phi_{\xx_j},\nabla \phi_{\xx_g^s})- k^2(\phi_{\xx_j},\phi_{\xx_g^s})+J(\phi_{\xx_j},\phi_{\xx_g^s}).} 
by taking $v_h=\phi_{\bm{x}_g^s}$ in \eqref{cip_fem}, we obtain the CIP-FE equation associated to  $\bm{x}_g^s$, which  can be written as follows:
\begin{align}\label{eqs}
	\sum_{\bm{x}_j\in \Lambda^s}D_j^sU_j=\sum_{q=1}^{n_g}\sum_{\bm{x}_j\in \Lambda_q^s}D_{j}^sU_j=0,\quad s=1, \cdots, n_g,
\end{align}
where
\begin{align*}
	\Lambda^s=\left\{\bm{x}_j\in\mathcal{N}_h:\;D_j^s\neq 0\right\}\quad\text{and}\quad \Lambda_q^s=\{\bm{x}_j\in\Lambda^s:\; \bm{x}_j\sim \bm{x}_g^q\}.
\end{align*}

By analogy with the continuous solution (see \eqref{bloch}), the invariance of grid prompts us to seek solutions satisfying the Bloch wave condition
\begin{align}\label{bloch_h}
	u_h(\bm{x}+\bm{m}h)=e^{\ii \bm{m}\cdot\bm{k}_h h}u_h(\bm{x}).
\end{align}
under this assumption, $\{U_j\}_{\bm{x}_j\in \Lambda^s}$ could be reperesented by  $$\mathbf{U}_g=\big(u_h(\bm{x
}_g^1),u_h(\bm{x
}_g^2),\cdots,u_h(\bm{x
}_g^{n_g})\big),$$ hence \eqref{eqs} leads to a  system of $n_g$ equations. 
\begin{align}\label{equarray}
	\mathcal{D}\mathbf{U}_g=0,
\end{align}
which admits nontrivial solution only if 
\begin{align}\label{kh}
	\mathbf{Det}(\mathcal{D})=0,
\end{align}

The explicit expression of $\mathcal{D}$ will be given later in the next two sections. 	Since $D_{j}^s$ are functions of $k$ and $h$, the forementioned equation derives a relationship between $k$ and $\bm{k}_h$. Let $k_h=|\bm{k}_h|$, by using spherical co-ordinates in $\mathbb{R}^d$, 
\begin{align*}
	\left\{\begin{array}{ll}
		k_{h1}&=k_h \cos \theta_1\\
		k_{h2}&=k_h \sin \theta_1\cos \theta_2\\
		&\cdots\\
		k_{hd}&=k_h \sin \theta_1\sin \theta_2\cdots\sin \theta_{d-1}
	\end{array}\right.,
\end{align*}
the difference $|k-k_h|$ is a function of $\theta_1,\cdots,\theta_{d-1}$ which measures the dispersion in various directions. Note that in multi-dimensional problems, we define the phase difference as the upper bound of $|k-k_h|$ with respect to $\theta_1,\cdots,\theta_{d-1}$.

We remark that the non-uniqueness of the generating set does not effect the definition of the discrete wave number. For example, for the two generating sets in Figure~\ref{2dgs}, the equation at $\tilde{\bm{x}}_{0,0}$ in \eqref{equarray}  corresponding to $\tilde{\mathcal{X}}_g$  could be obtained by multiplying a non-zero factor by the equation at $\bm{x}_{0,0}$ in \eqref{equarray} corresponding to $\mathcal{X}_g$.

\section{Dispersion analysis in one dimension}
\begin{figure}
	\centering
	\includegraphics[scale=0.75,trim={0cm 3cm 0cm 0}]{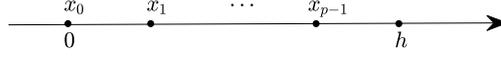}
	\caption{Generating set on $\mathbb{R}$.}
	\label{1dgs}
\end{figure}

In this section we carry out dispersion analysis for the CIP-FEM for the one-dimensional problem. For simplicity, we suppose the mesh $\mathcal{M}_h=\{[(j-1)h, jh]:\; j\in\mathbb{Z}\}$. Then the set of global nodes of the $p^{\rm th}$ order FE space $V_h$ is $\mathcal{N}_h=\{x_m:=mh/p:\;m\in\mathbb{Z}\}$. According to Definition~\ref{def_generate}, a generating set of $\mathcal{N}_h$ is $\mathcal{X}_g=\{x_0,\cdots,x_{p-1}\}$ as shown in Figure~\ref{1dgs}

The coefficient matrix $\mathcal{D}$ in \eqref{equarray} is a $p\times p$ matrix whose explicit form is given by the following lemma. 
\begin{lemma}\label{1deq} Suppose $\gamma\in\mathbb{R}$. 
	When solving the one-dimensional Helmholtz equation with $p^{\rm th}$ order CIP-FEM, the coefficient matrix $\mathcal{D}=\big(\DD_{i,j}^{t,t_h}\big)_{p\times p}$ associated to the generating set $\mathcal{X}_g=\{x_0,\cdots,x_{p-1}\}$ takes the following form: for $1\le i,j\le p-1$,
	\begin{equation*}
		\begin{array}{lll}
			\DD_{1,1}^{t,t_h} &=\left\{
			\begin{array}{ll}
				\begin{split}
					2B_t(\lambda_0,\lambda_0)&+2\cos t_hB_t(\lambda_p,\lambda_0)\\
					&+2p^{2p}\big(1-\cos(2t_h)\big)\gamma,
				\end{split}		 &\hspace{17mm} p  \text{ even},\\
				\begin{split}
					2B_t(\lambda_0,\lambda_0)&+2\cos t_hB_t(\lambda_p,\lambda_0)\\
					&+2p^{2p}\big(3-4\cos t_h+\cos(2t_h)\big)\gamma,
				\end{split}&\hspace{17mm} p \text{ odd},
			\end{array} \right.\\
			\DD_{1,j+1}^{t,t_h}&=\left\{
			\begin{array}{lll}
				\begin{split}
					B_t(\lambda_j,\lambda_0)&+e^{-\ii t_h}B_t(\lambda_{j},\lambda_p)\\
					&+{(-1)}^j\tbinom pjp^{2p}
					(1+e^{-\ii t_h}-e^{\ii t_h}-e^{-2\ii t_h})\gamma,
				\end{split}&p\text{ even},\\
				\begin{split}
					B_t(\lambda_j,\lambda_0)&+e^{-\ii t_h}B_t(\lambda_{j},\lambda_p)\\
					&+{(-1)}^j\tbinom pjp^{2p}
					(3-3e^{-\ii t_h}-e^{\ii t_h}+e^{-2\ii t_h})\gamma,
				\end{split}&p \text{ odd},
			\end{array} \right.\\
			\DD_{i+1,1}^{t,t_h}&= \mathbf{conj}(\DD_{1,i+1}^{t,t_h}),\\
			\DD_{i+1,j+1}^{t,t_h}&=B_t(\lambda_i,\lambda_j)+2{(-1)}^{i+j}\tbinom pi\tbinom pj p^{2p}(1-\cos t_h)\gamma,
		\end{array}
	\end{equation*}
	where $t=kh,t_h=k_h h$,  $B_t(u,v)=\int_0^1(u'v'-t^2uv)\D x$, and $\{\lambda_i\}_{0\leq i\leq p}$ is the nodal basis of the $p^{\rm th}$ order Lagrange finite element on $[0,1]$, i.e., $\lambda_i\in\mathbb{Q}_p$ satisfies $\lambda_i(\frac{j}{p})=\delta_{ij}(0\leq i,j\leq p)$.
\end{lemma}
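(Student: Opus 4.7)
My plan is to derive each entry of the $p\times p$ matrix $\mathcal{D}$ by expanding the CIP-FEM equation at the generating node $x_g^s = x_{s-1}$, evaluating $\mathcal{A}_\gamma(\phi_{x_j},\phi_{x_g^s})$ element by element after an affine change of variable to the reference element, and then folding the Bloch condition $U_{j+mp}=e^{imt_h}U_j$ to collect the coefficient of each $U_{q-1}$ with $1\le q\le p$. The rescaling $\xi=(x-kh)/h$ on the element $[kh,(k+1)h]$ sends $(\nabla\phi_{x_i},\nabla\phi_{x_j})_E-k^2(\phi_{x_i},\phi_{x_j})_E$ to $h^{-1}B_t(\lambda_{i'},\lambda_{j'})$, where $i',j'$ are the local indices and $t=kh$; multiplying \eqref{eqs} through by $h$ leaves $\mathbf{Det}(\mathcal{D})=0$ unchanged and produces exactly the $B_t$ terms listed in the statement.

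The decisive preliminary calculation is the value of $\lambda_i^{(p)}$: since $\lambda_i\in\mathbb{Q}_p$ has degree $p$, reading off its leading coefficient from the Lagrange formula gives the constant $\lambda_i^{(p)}=(-1)^{p-i}p^p\binom{p}{i}$. Pulled back, $\partial_x^p\phi_{x_j}$ is piecewise constant of magnitude $h^{-p}p^p\binom{p}{i'}$, and its jump at a mesh vertex comes in two forms: for an interior basis function the jump is simply $\pm h^{-p}\lambda_{i'}^{(p)}$, while for the vertex-basis function $\phi_{x_0}$ the jump at $x=0$ equals $h^{-p}(\lambda_0^{(p)}-\lambda_p^{(p)})=h^{-p}p^p((-1)^p-1)$, which vanishes when $p$ is even but equals $-2h^{-p}p^p$ when $p$ is odd. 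This parity dichotomy is precisely what gives the two separate formulas for $\mathcal{D}_{1,1}^{t,t_h}$ and $\mathcal{D}_{1,j+1}^{t,t_h}$.

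With the jumps in hand I would assemble each row. For $\mathcal{D}_{1,1}$ the FEM overlap of $\phi_{x_0}$ with $\phi_{x_{mp}}$ forces $m\in\{-1,0,1\}$; using the reference-element symmetry $B_t(\lambda_p,\lambda_p)=B_t(\lambda_0,\lambda_0)$ and $e^{it_h}+e^{-it_h}=2\cos t_h$ delivers the FEM piece $2B_t(\lambda_0,\lambda_0)+2\cos t_h\,B_t(\lambda_p,\lambda_0)$. The penalty part needs $m\in\{-2,\dots,2\}$, and collapses to $2p^{2p}(1-\cos 2t_h)\gamma$ in the even case (only $m=0,\pm2$ survive, because the central jump vanishes) and to $2p^{2p}(3-4\cos t_h+\cos 2t_h)\gamma$ in the odd case (all five shifts contribute). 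For $\mathcal{D}_{1,j+1}$ the FEM shifts are $m\in\{-1,0\}$ and the penalty shifts $m\in\{-2,-1,0,1\}$, yielding $B_t(\lambda_j,\lambda_0)+e^{-it_h}B_t(\lambda_j,\lambda_p)$ plus the stated polynomial in $e^{\pm it_h}$ with coefficient $(-1)^j\binom{p}{j}p^{2p}\gamma$. The interior block $\mathcal{D}_{i+1,j+1}$ with $i,j\ge 1$ is cleaner, since both basis functions sit on $[0,h]$: only $m=0$ contributes to FEM and $m\in\{-1,0,1\}$ to penalty, delivering $B_t(\lambda_i,\lambda_j)+2(-1)^{i+j}\binom{p}{i}\binom{p}{j}p^{2p}(1-\cos t_h)\gamma$ after using $(-1)^{2p-i-j}=(-1)^{i+j}$.

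Finally, $\mathcal{D}_{i+1,1}=\mathbf{conj}(\mathcal{D}_{1,i+1})$ follows from the symmetry of $\mathcal{A}_\gamma$ (real $\gamma$) together with translation invariance of the mesh, which together give $D^{i+1}_{mp}=D^1_{i-mp}$; the substitution $m\mapsto -m$ in the Bloch sum then turns $e^{imt_h}$ into $e^{-imt_h}$, i.e.\ the complex conjugate. I expect no serious obstacle beyond careful bookkeeping: for $\mathcal{D}_{1,1}$ one must sign every jump consistently across five Bloch shifts and three mesh vertices, distinguish the parity cases cleanly, and verify that the resulting combinations collapse into the exact trigonometric polynomials written in the lemma.
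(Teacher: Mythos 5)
Your proposal is correct and follows essentially the same route as the paper's proof: rescale each element to obtain the $h^{-1}B_t(\lambda_i,\lambda_j)$ terms, use $\lambda_i^{(p)}=(-1)^{p-i}\binom{p}{i}p^p$ so that the vertex function's central jump vanishes for even $p$ (the source of the parity dichotomy), and assemble the Bloch-weighted sums over exactly the shifts you list. The only minor deviation is that you obtain $\DD_{i+1,1}^{t,t_h}=\mathbf{conj}(\DD_{1,i+1}^{t,t_h})$ from symmetry of $\Aga$ and translation invariance rather than by directly expanding the equation at $x_i$ as the paper does, which is a valid (and slightly slicker) shortcut.
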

\begin{proof}
	For simplicity, denote by $\phi_i=\phi_{x_i}$ the nodal basis function at $x_i$. It is clear that $\phi_i(x)=\lambda_i(\frac{x}{h})$ for $x\in [0,h]$ and $0\le i\le p$. By change of variable, we have
	\begin{align}\label{em1}
		\int_0^h(\phi_i'\phi_j'-k^2\phi_i\phi_j)\D x
		=\frac{1}{h}B_t(\lambda_i,\lambda_j), \quad 0\le i,j \le p.
	\end{align}
	
	On the other hand,  from  the Lagrange interpolation formula,  $$\lambda_i(x)={\prod_{{j=0}\atop{j\neq i}}^p\Big(x-\frac jp\Big)}\big/{\prod_{{j=0}\atop{j\neq i}}^p\Big(\frac ip-\frac jp\Big)},$$ and hence
	\begin{align}\label{em2}
		\phi_i^{(p)}=h^{-p}\lambda_i^{(p)}=h^{-p}{(-1)}^{p-i}\tbinom pip^p,\quad x\in [0,h],\; 0\le i\le p.
	\end{align}
	
	Next we consider the CIP-FE equation associated to $x_0=0$. From the FE scheme \eqref{vp1}, Bloch wave condition \eqref{bloch_h}, the identites \eqref{em1}--\eqref{em2}, and the fact that $\phi_{i+mp}(x)=\phi_i(x-mh),\forall m\in\mathbb{Z}$,  we derive that
	\begin{align*}
		0=&\Aga(\phi_0,\phi_0)U_0+\Aga(\phi_p,\phi_0)(U_p+U_{-p})+\Aga(\phi_{2p},\phi_0)(U_{2p}+U_{-2p})\\
		&+\sum_{j=1}^{p-1}\Big(\Aga(\phi_j,\phi_0)U_j+\Aga(\phi_{j-p},\phi_0)U_{j-p}\\
		&\qquad+\Aga(\phi_{j+p},\phi_0)U_{j+p}+\Aga(\phi_{j-2p},\phi_0)U_{j-2p}\Big)\\
		=&\Big(\mathcal{A}(\phi_0,\phi_0)+J(\phi_0,\phi_0)+2\cos t_h\big(\mathcal{A}(\phi_p,\phi_0)+J(\phi_p,\phi_0)\big)\\
		&\qquad+2\cos(2t_h)J(\phi_{2p},\phi_0)\Big)U_0\\
		&+\sum_{j=1}^{p-1}\Big(\mathcal{A}(\phi_j,\phi_0)+J(\phi_j,\phi_0)+e^{-\ii t_h}\big(\mathcal{A}(\phi_{j-p},\phi_0)+J(\phi_{j-p},\phi_0)\big)\\
		&\qquad+e^{\ii t_h}J(\phi_{j+p},\phi_0)+e^{-2\ii t_h}J(\phi_{j-2p},\phi_0)\Big)U_j\\
		=&\frac{1}{h}\bigg(2B_t(\lambda_0,\lambda_0)+\Big(2+\big(1-(-1)^p\big)^2\Big)p^{2p}\gamma\\
		&\qquad+2\cos t_h\Big(B_t(\lambda_p,\lambda_0)-\big(1-(-1)^p\big)^2p^{2p}\gamma\Big)-2\cos(2t_h)(-1)^p p^{2p}\gamma \bigg)U_0\\
		&+\frac{1}{h}\sum_{j=1}^{p-1}\bigg(B_t(\lambda_j,\lambda_0)+(-1)^{p-j}\tbinom{p}{j}\big(2(-1)^p-1\big)p^{2p}\gamma\\
		&\qquad+e^{-\ii t_h}\Big(B_t(\lambda_j,\lambda_p)+(-1)^{p-j}\tbinom{p}{j}\big(2-(-1)^p\big)p^{2p}\gamma\Big)\\
		&\qquad-e^{\ii t_h}(-1)^{2p-j}\tbinom{p}{j}p^{2p}\gamma -e^{-2\ii t_h}(-1)^{p-j}\tbinom{p}{j}p^{2p}\gamma \bigg)U_{j}\\
		=&\frac{1}{h}\bigg(2B_t(\lambda_0,\lambda_0)+2\cos t_hB_t(\lambda_p,\lambda_0)\\
		&\quad+\Big(2+\big(1-(-1)^p\big)^2-2\big(1-(-1)^p\big)^2\cos t_h-2(-1)^p\cos(2t_h)\Big)p^{2p}\gamma\bigg)U_0\\
		&+\frac{1}{h}\sum_{j=1}^{p-1}\Big\{B_t(\lambda_j,\lambda_0)+e^{-\ii t_h}B_t(\lambda_{j},\lambda_p)\\
		&\quad+\Big(\big(2-(-1)^p\big)+\big(2(-1)^p-1\big)e^{-\ii t_h}-e^{\ii t_h}-(-1)^p e^{-2\ii t_h}\Big)(-1)^{j}\tbinom pj p^{2p}\gamma\Big\} U_{j},
	\end{align*}
	which implies that the first two formulas hold. To prove the last two formulas, we consider the equations associated to $x_i$ for $1\le i\le p-1$. Similar as above, we have
	\begin{align*}
		0=&\Aga(\phi_0,\phi_i)U_0+\Aga(\phi_p,\phi_i)U_p+J(\phi_{-p},\phi_i)U_{-p}+J(\phi_{2p},\phi_i)U_{2p}\\
		&+\sum_{j=1}^{p-1}\Big(\Aga(\phi_j,\phi_i)U_j+J(\phi_{j-p},\phi_i)U_{j-p}+J(\phi_{j+p},\phi_i)U_{j+p}\Big)\\
		=&\frac{1}{h}\bigg(B_t(\lambda_0,\lambda_i)+e^{\ii t_h}B_t(\lambda_p,\lambda_{i})\\
		&\quad+\Big(\big(2-(-1)^p\big)+\big(2(-1)^p-1\big)e^{\ii t_h}-e^{-\ii t_h}-(-1)^p e^{2\ii t_h}\Big)(-1)^i\tbinom p i p^{2p}\gamma\bigg) U_0\\
		&+\frac{1}{h}\sum_{j=1}^{p-1}\bigg(B_t(\lambda_j,\lambda_i)+(-1)^{i+j}\tbinom p i\tbinom p j(2-2\cos t_h) p^{2p}\gamma\bigg)U_{j},
	\end{align*}
	which implies the last two formulas.
	This completes the proof of Lemma~\ref{1deq}.
\end{proof}
Next we turn to analyze  $\mathbf{Det}(\DD^{t,t_h})$ but it is hard to do so by using the explicit form given in the above lemma. We have to do some simplifications. Notice that, for FEM (i.e. $\gamma=0$), since the nodal degrees of freedom at $x_{mp} (m\in\mathbb{Z})$ and the interior ones  at $x_{mp+j}(1\le j\le p-1)$ can be decoupled, the $(1,i)^{\rm th},(i,1)^{\rm th}(2\leq i\leq p)$ entries in $\DD^{t,t_h}$ (with $\gamma=0$) can be eliminated by Gaussian elimination or static condensation \cite{MA1,dispersion,hp2}. Although such a procedure for FEM can not eliminate those entries for CIP-FEM (with $\gamma\neq 0$), it does transform the matrix to another simpler and more operable form.  This procedure is equivalent to modified the basis functions at mesh points $x_0$ and $x_p$ as follows (cf. \cite{MA1}). Let
\begin{align*}
	\xi_0&:=\lambda_0+\sum_{i=1}^{p-1}c_i\lambda_i,\\
	\xi_1&:=\lambda_p+\sum_{i=1}^{p-1}d_i\lambda_i,
\end{align*}
$c_i,d_i$ are functions of $t$, such that
\begin{align}\label{defcidi}
	\left\{\begin{array}{ll}
		B_t(\xi_0,\lambda_i)&=0\\
		B_t(\xi_1,\lambda_i)&=0
	\end{array} \right.,\quad 1\leq i\leq p-1.
\end{align}
The existence and uniqueness of $c_i$ and $d_i$ hold if $t$ is not a discrete eigenvalue, in particular,  if $t$ is sufficiently small as a consequence of Lemma~\ref{positive} below.

\begin{lemma}\label{positive}
	Let $D_0:=\big(B_0(\lambda_i,\lambda_j)\big)_{1\leq i,j\leq p-1}=\big(\int_0^1\lambda_i'\lambda_j'\D x\big)_{1\leq i,j\leq p-1}$. Then 
	$$\alpha_0:=\mathbf{Det}(D_0)>0.$$ 
\end{lemma}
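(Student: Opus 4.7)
The plan is to recognize $D_0$ as the Gram matrix of the interior Lagrange basis functions with respect to the bilinear form $\langle u,v\rangle := \int_0^1 u'v'\,\mathrm{d}x$, and then invoke positive-definiteness of that form on $H^1_0([0,1])$.

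First I would note that for $1\le i\le p-1$ the basis function $\lambda_i$ satisfies $\lambda_i(0)=\lambda_i(1)=0$ by the interpolation conditions $\lambda_i(j/p)=\delta_{ij}$, so $\lambda_1,\ldots,\lambda_{p-1}\in H^1_0([0,1])$. These polynomials are linearly independent (any linear combination is uniquely determined by its values at the interior nodes $1/p,\ldots,(p-1)/p$).

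Next I would verify that $\langle u,v\rangle$ is an inner product on $H^1_0([0,1])$. Bilinearity and symmetry are immediate. For positive definiteness, if $u\in H^1_0([0,1])$ and $\langle u,u\rangle=\int_0^1(u')^2\,\mathrm{d}x=0$, then $u'\equiv 0$ a.e., so $u$ is constant; combined with $u(0)=0$ this forces $u\equiv 0$.

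Finally, $D_0=\bigl(\langle\lambda_i,\lambda_j\rangle\bigr)_{1\le i,j\le p-1}$ is the Gram matrix of a linearly independent family in an inner product space, hence symmetric positive definite. In particular $\alpha_0=\mathbf{Det}(D_0)>0$. There is essentially no obstacle here; the only point requiring care is checking that the interior Lagrange basis actually lies in $H^1_0$, which is immediate from the nodal definition.
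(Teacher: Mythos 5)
Your proof is correct and follows essentially the same route as the paper: both establish that $D_0$ is symmetric positive definite by identifying $\bm{v}^TD_0\bm{v}=\|v'\|_{L^2(0,1)}^2$ for $v=\sum_{i=1}^{p-1}v_i\lambda_i$, using that the interior basis functions vanish at the endpoints. The only cosmetic difference is that the paper invokes the Poincar\'e inequality on $H_0^1(0,1)$ to bound the form below by $|\bm{v}|^2$, whereas you argue via linear independence and the Gram-matrix characterization, which is equally valid.
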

\begin{proof}
	Although the proof is trivial, the result is of major importance. For any $\bm{v}=(v_1,\cdots,v_{p-1})^T$ $\in\mathbb{R}^{p-1}$, let
	$v=\sum_{i=1}^{p-1}v_i\lambda_i$, according to Poincar\'{e} inequality on $H_0^1(0,1)$,
	$$\bm{v}^TD_0\bm{v}=\lVert v'\rVert_{L_2(0,1)}^2\ge \pi^2\lVert v\rVert_{L_2(0,1)}^2\gtrsim|\bm{v}|^2,$$
	thus $D_0$ is positive definite. which completes the proof of Lemma~\ref{positive}.
\end{proof}

In order to make the structure of the article clear, we put the proofs of the following three lemmas in Appendices~\ref{A1} to \ref{A3}, repectively.

By using  \cite[Theorem 3.1, Theorem 4.1 and Theorem 4.2]{MA1}, we may prove the following two lemmas which give explicit forms of the basis functions $\xi_0, \xi_1$, and the coefficients $c_i$ and $d_i$. The proofs are given in Appendices~\ref{A1} and \ref{A2}, respectively.
\begin{lemma}\label{pepo}
	The explicit form of $\xi_0$ and $\xi_1$ reads:
	$$\xi_0=\frac{1}{2}(\pe-\po),\quad\xi_1=\frac{1}{2}(\pe+\po),$$
	where
	\begin{align}\label{exppepo}
		\begin{array}{l}
			\pe(x):=\sum_{j=1}^{N_e+1}\Big\{(-1)^jt^{-2j}\frac{2(2N_e+1)!}{(2N_e+2-2j)!}\sum_{m=0}^{2N_e+2-2j}\tbinom{2N_e+2j}{m-1+2j}\tbinom{2N_e+2-2j}{m}\\
			\qquad\qquad x^{2N_e+2-2j-m}(x-1)^m\Big\}\bigg/\sum_{j=1}^{N_e+1}\Big\{(-1)^jt^{-2j}{\frac{2(2N_e+2j)!}{(2N_e+2-2j)!(2j-1)!}}\Big\},\\
			\po(x):=\sum_{j=1}^{N_o}\Big\{(-1)^jt^{-2j}\frac{2(2N_o)!}{(2N_o+1-2j)!}\sum_{m=0}^{2N_o+1-2j}\tbinom{2N_o-1+2j}{m-1+2j}\tbinom{2N_o+1-2j}{m}\\
			\qquad\qquad x^{2N_o+1-2j-m}(x-1)^m\Big\}\bigg/\sum_{j=1}^{N_o}\Big\{(-1)^jt^{-2j}{\frac{2(2N_o-1+2j)!}{(2N_o+1-2j)!(2j-1)!}}\Big\},\\
			N_e :=  \big\lfloor \frac{p}2 \big\rfloor,N_o := \big\lfloor \frac{p+1}2\big\rfloor.
		\end{array}
	\end{align}
	we also have the following estimates:
	\begin{align}\label{btpepo}
		\begin{array}{l}
			B_t(\pe,\pe)=-2t\tan{\frac{t}{2} }+{\Big[\frac{(2N_e+1)!}{(4N_e+2)!}\Big]}^2 \frac{t^{4N_e+4}}{4N_e+3}+\mathcal O(t^{4N_e+6}),\hspace{4mm} t \neq m\pi,m \in \mathbb{Z},\vspace{1mm}\vspace{2mm}\\
			B_t(\po,\po)=\hspace{3mm}2t\cot{\frac{t}{2} }+4{\Big[\frac{(2N_o)!}{(4N_o)!}\Big]}^2 \frac{t^{4N_o}}{4N_o+1}+\mathcal O(t^{4N_o+2}),\hspace{6mm} t \neq m\pi,m \in \mathbb{Z},\vspace{2mm}\\
		\end{array}
	\end{align}
\end{lemma}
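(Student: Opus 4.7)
The plan is to identify $\pe$ and $\po$ with the symmetric and antisymmetric ``essential modes'' that arise in Ainsworth's dispersion analysis for the classical FEM, and then transfer the explicit polynomial representations and the asymptotic expansions directly from \cite{MA1}.

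First, I would observe that $\xi_0, \xi_1 \in \mathbb{Q}_p$ are uniquely characterized by the boundary values $\xi_0(0)=1$, $\xi_0(1)=0$, $\xi_1(0)=0$, $\xi_1(1)=1$ (which hold because each interior basis function $\lambda_i$ ($1\le i\le p-1$) vanishes at $0$ and $1$), together with the orthogonality \eqref{defcidi}. For small $t$, uniqueness is guaranteed by Lemma~\ref{positive} (the $(p-1)\times(p-1)$ stiffness matrix remains invertible by continuity near $t=0$).

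Second, I would establish the reflection symmetry $\xi_1(x) = \xi_0(1-x)$. The bilinear form $B_t$ is invariant under the substitution $x \mapsto 1-x$, since $(u')^2$ and $u\cdot v$ are both preserved by this reflection. The Lagrange basis transforms as $\lambda_i(1-\cdot) = \lambda_{p-i}(\cdot)$ because the nodes $\{j/p\}_{j=0}^p$ are symmetric about $1/2$. Setting $\tilde\xi_0(x):=\xi_1(1-x)$, one checks directly that $\tilde\xi_0\in\mathbb{Q}_p$, $\tilde\xi_0(0)=\xi_1(1)=1$, $\tilde\xi_0(1)=\xi_1(0)=0$, and $B_t(\tilde\xi_0,\lambda_i)=B_t(\xi_1,\lambda_{p-i})=0$ for $1\le i\le p-1$. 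By the uniqueness in the previous step, $\tilde\xi_0=\xi_0$, giving the desired symmetry.

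Third, defining $\pe := \xi_0 + \xi_1$ and $\po := \xi_1 - \xi_0$, the reflection symmetry yields $\pe(x)=\pe(1-x)$ and $\po(x)=-\po(1-x)$. Both functions lie in $\mathbb{Q}_p$, are $B_t$-orthogonal to every interior nodal basis function, and take the prescribed boundary values $\pe(0)=\pe(1)=1$, $\po(0)=-1$, $\po(1)=1$. These defining properties are exactly those of the symmetric and antisymmetric essential modes used in \cite[\S 3--4]{MA1} for the dispersion analysis of the classical FEM via static condensation. Invoking \cite[Theorem 3.1]{MA1} yields the explicit Pad\'e-type polynomial representations stated in \eqref{exppepo}, while \cite[Theorems 4.1 and 4.2]{MA1} provide the small-$t$ expansions stated in \eqref{btpepo} for $B_t(\pe,\pe)$ and $B_t(\po,\po)$.

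The main obstacle is the bookkeeping needed to reconcile our normalization (fixed by $\xi_0(0)=1$ and $\xi_1(1)=1$) with the conventions in \cite{MA1}, which involves tracking a single multiplicative constant for each parity; after this, the explicit polynomial formulas and their Taylor expansions around $t=0$ pass over without further work. The leading $-2t\tan(t/2)$ and $2t\cot(t/2)$ terms in \eqref{btpepo} reflect the exact ``continuous'' dispersion contribution that would be captured by the Pad\'e denominators, while the polynomial tails quantify the $p^{\rm th}$-order finite element correction.
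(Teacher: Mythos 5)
Your route is essentially the paper's: both proofs outsource the real content to \cite[Theorems 3.1, 4.1, 4.2]{MA1}, and your extra step --- characterizing $\xi_0,\xi_1$ by their endpoint values plus the orthogonality \eqref{defcidi} (uniqueness for small $t$ via Lemma~\ref{positive}), proving $\xi_1(x)=\xi_0(1-x)$ by reflection, and thereby identifying $\Phi_e=\xi_0+\xi_1$, $\Phi_o=\xi_1-\xi_0$ with Ainsworth's even/odd essential modes --- is a correct and welcome addition that the paper leaves implicit (it simply says ``Following \cite[\S 4.1]{MA1}''). The one place where your write-up undersells the work is the ``bookkeeping'': it is not a single multiplicative normalization constant per parity (indeed, with endpoint values fixed at $1$ the normalizations already agree), but a transfer from Ainsworth's reference setting to ours, namely the affine map $x=\frac{s+1}{2}$ from $[-1,1]$ to $[0,1]$ together with the scaling $B_t(u,v)=2\hat B_{t/2}(u,v)$, $\hat B_\tau(u,v)=\int_{-1}^1(u'v'-\tau^2uv)\,\mathrm{d}s$, so that the wave-number parameter is halved, $\tau=t/2$. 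This substitution is exactly what turns Ainsworth's expansions into the $-2t\tan\frac t2$ and $2t\cot\frac t2$ leading terms and fixes the constants and powers of $t$ in \eqref{btpepo}; tracking only a constant factor would yield $\tan t$, $\cot t$ and wrong remainder coefficients. Likewise, \eqref{exppepo} is not a verbatim quotation of \cite{MA1}: the paper's appendix obtains it by inserting the explicit formula for the derivatives $\mathcal{L}_n^{(d)}$ of the Legendre polynomials into \cite[(4.6)]{MA1} and then performing the same change of variables. So your outline is sound and matches the paper's strategy, but to be complete it must carry out (or at least state) this interval/wave-number rescaling and the Legendre-derivative expansion rather than dismiss them as a normalization constant.
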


\begin{lemma}\label{cidi}
	\begin{align*}
		\begin{array}{cll}
			c_i&=&\xi_0(\frac{i}{p})=\frac{1}{2}\big(\pe(\frac i p)-\po(\frac i p)\big),\quad 1\leq i\leq p-1,\\
			d_i&=&\xi_1(\frac{i}{p})=\frac{1}{2}\big(\pe(\frac i p)+\po(\frac i p)\big),\quad 1\leq i\leq p-1,\vspace{1mm}\\
			A_1:&=&\sum_{i=1}^{p-1}{(-1)}^i c_i\tbinom pi\vspace{1mm}\\
			&=&\frac{{(-1)}^p}{\sum_{j=1}^{\lfloor\frac p2\rfloor+1}{(-1)}^jt^{-2j}\frac{4(p+2j)!}{(p+2-2j)!(2j-1)!}}\sum_{j=1}^{\lfloor\frac p2\rfloor+1}\Big\{{(-1)}^jt^{-2j}
			\frac{2(p+1)!}{(p+2-2j)!p^{p+2-2j}}\vspace{1mm}\\
			&&\sum_{i=1}^{p-1}{(-1)}^i \tbinom pi\sum_{m=0}^{p+2-2j} \tbinom{p+2j}{m-1+2j} \tbinom{p+2-2j}{m}
			i^{p+2-2j-m}(i-p)^m\Big\},\\
			A_2:&=&\sum_{i=1}^{p-1}{(-1)}^i d_i\tbinom pi=(-1)^p A_1.
		\end{array}
	\end{align*}
\end{lemma}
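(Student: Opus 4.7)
The plan proceeds in three parts, corresponding to the three groups of identities in the statement.

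First, to derive the identities $c_i = \xi_0(i/p) = \tfrac12(\pe(i/p) - \po(i/p))$ and $d_i = \xi_1(i/p) = \tfrac12(\pe(i/p) + \po(i/p))$, I will evaluate the defining expansions $\xi_0 = \lambda_0 + \sum_{k=1}^{p-1} c_k \lambda_k$ and $\xi_1 = \lambda_p + \sum_{k=1}^{p-1} d_k \lambda_k$ at the interior node $x = i/p$. The Lagrange nodal property $\lambda_k(j/p) = \delta_{kj}$ (so $\lambda_0(i/p) = \lambda_p(i/p) = 0$ for $1 \le i \le p-1$) gives the first equality in each line, and substituting the representations $\xi_0 = \tfrac12(\pe - \po)$, $\xi_1 = \tfrac12(\pe + \po)$ from Lemma~\ref{pepo} produces the second.

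Second, to establish $A_2 = (-1)^p A_1$, I will exploit the parity symmetries $\pe(1-x) = \pe(x)$ and $\po(1-x) = -\po(x)$. These can be read off from the explicit formulas in \eqref{exppepo}: substituting $x \mapsto 1-x$ and reindexing $m \mapsto a - m$ in the inner sum (where $a = 2N_e+2-2j$ is even for $\pe$ and $a = 2N_o+1-2j$ is odd for $\po$) reproduces each summand times $(-1)^a$. Setting $S := \sum_{i=1}^{p-1}(-1)^i \binom{p}{i}\pe(i/p)$ and $T := \sum_{i=1}^{p-1}(-1)^i \binom{p}{i}\po(i/p)$, the change of variables $i \mapsto p-i$ combined with these symmetries yields $S = (-1)^p S$ and $T = -(-1)^p T$; hence $T = 0$ for even $p$ and $S = 0$ for odd $p$. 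Since $A_1 = \tfrac12(S - T)$ and $A_2 = \tfrac12(S + T)$, the relation $A_2 = (-1)^p A_1$ follows in both parity cases.

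Third, to derive the explicit closed form for $A_1$, I will start from $A_1 = \tfrac{(-1)^p}{2} S$ (for even $p$) or $A_1 = \tfrac{(-1)^p}{2} T$ (for odd $p$) and substitute the expressions from \eqref{exppepo}. The key coincidences that unify the two parity cases are $2N_e + 2j = 2N_o - 1 + 2j = p + 2j$, $2N_e + 2 - 2j = 2N_o + 1 - 2j = p + 2 - 2j$, and $N_e + 1 = N_o = \lfloor p/2 \rfloor + 1$; under these substitutions the numerators of $\pe$ and $\po$ and their respective denominators take the same polynomial form. Using $(i/p)^{a-m}(i/p-1)^m = p^{-a}\, i^{a-m}(i-p)^m$ with $a = p+2-2j$, and absorbing the $\tfrac12$ into a doubling of the common denominator, then gives the claimed formula. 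I expect the main obstacle to be careful index bookkeeping — in particular, the verification of the parity symmetries of $\pe, \po$ from their explicit formulas, and the matching of indices between the even and odd cases — rather than any genuinely new analytic input.
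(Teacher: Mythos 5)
Your proposal is correct and follows essentially the same route as the paper's Appendix~\ref{A2}: nodal evaluation via the Lagrange property for $c_i,d_i$, cancellation of one of the two sums according to the parity of $p$, and substitution of the explicit formulas \eqref{exppepo} with $x=i/p$ and the factor $\tfrac12$ absorbed into the denominator. The only cosmetic difference is that you package the cancellation as the symmetries $\pe(1-x)=\pe(x)$, $\po(1-x)=-\po(x)$ combined with $i\mapsto p-i$, whereas the paper performs the equivalent double reindexing $i\to p-i$, $m\to p+1-2j-m$ directly inside the double sum to get $E_1=-E_1$.
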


The following lemma is used to simplify $A_1$ and $A_2$ in the above lemma, which  can be derived in virtue of the combination formulas stated in \cite{table}. The proof is given in  Appendix~\ref{A3}.
\begin{lemma}\label{combination}
	\begin{align*}
		\mathcal N:=&\sum_{i=1}^{p-1}{(-1)}^i\tbinom{p}{i}\sum_{m=0}^{p+2-2j}\tbinom{p+2j}{m-1+2j}\tbinom{p+2-2j}{m}
		i^{p+2-2j-m}{(i-p)}^m,\\
		=&\left\{
		\begin{array}{lc}
			2{(-1)}^p \left(\frac{(2p+1)!}{(p+1)!}-(p+2)p^p\right),&j=1,\vspace{3mm}\\			
			2{(-1)}^{p+1}\tbinom{p+2j}{2j-1}p^{p+2-2j},&2\leq j\leq \lfloor{\frac p 2}\rfloor+1.
		\end{array} \right.
	\end{align*}
\end{lemma}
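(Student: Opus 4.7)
My plan is to prove the identity by a generating-function coefficient-extraction argument combined with the classical vanishing identity $\sum_{i=0}^{p}(-1)^i\binom{p}{i}i^\ell=0$ for $\ell<p$ (with value $(-1)^p p!$ at $\ell=p$), which is the only nontrivial input. The first step is to rewrite the inner $m$-sum as a coefficient extraction. Writing $i+(i-p)x=i(1+x)-px$ and recognizing $\binom{p+2j}{m-1+2j}$ as the coefficient of $x^{m-1+2j}$ in $(1+x)^{p+2j}$, one verifies directly that
$$\sum_{m=0}^{p+2-2j}\binom{p+2j}{m-1+2j}\binom{p+2-2j}{m}i^{p+2-2j-m}(i-p)^m=[x^{p+1}](1+x)^{p+2j}\bigl(i(1+x)-px\bigr)^{p+2-2j}.$$
Interchanging the $i$- and $m$-sums then gives $\mathcal N=[x^{p+1}](1+x)^{p+2j}\,T(x)$, where $T(x):=\sum_{i=1}^{p-1}(-1)^i\binom{p}{i}\bigl(i(1+x)-px\bigr)^{p+2-2j}$.

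Next I would extend the $i$-range to $0\le i\le p$, explicitly subtracting the endpoint contributions (from $i=0$ and $i=p$, which evaluate to $(-px)^{p+2-2j}$ and $(-1)^p p^{p+2-2j}$ respectively). Expanding $\bigl(i(1+x)-px\bigr)^{p+2-2j}$ by the binomial theorem in powers of $i$ turns the extended sum into
$$\sum_{\ell=0}^{p+2-2j}\binom{p+2-2j}{\ell}(1+x)^\ell(-px)^{p+2-2j-\ell}\left(\sum_{i=0}^{p}(-1)^i\binom{p}{i}i^\ell\right),$$
which by the Stirling-type vanishing identity equals $0$ when $j\ge 2$ (since $p+2-2j<p$, every admissible $\ell$ kills the inner sum) and equals $(-1)^p p!(1+x)^p$ when $j=1$ (only $\ell=p$ survives).

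To finish, for $j\ge 2$, using $(-1)^{p+2-2j}=(-1)^p$, the two endpoint corrections combine to $T(x)=-(-1)^p p^{p+2-2j}\bigl(1+x^{p+2-2j}\bigr)$, and since $[x^{p+1}](1+x)^{p+2j}(1+x^{p+2-2j})=\binom{p+2j}{p+1}+\binom{p+2j}{2j-1}=2\binom{p+2j}{2j-1}$, the claimed formula follows. For $j=1$ the extra Stirling contribution $(-1)^p p!(1+x)^p$ adds a term involving $[x^{p+1}](1+x)^{2p+2}$; applying the simplification $\tfrac{p!}{2}\binom{2p+2}{p+1}=\tfrac{(2p+1)!}{(p+1)!}$ reproduces exactly the $j=1$ expression. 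The main obstacle I foresee is purely bookkeeping: tracking the signs from $(-px)^{p+2-2j}$ and $(-1)^p$, and cleanly handling the $j=1$ case where the Stirling identity does not annihilate every term. No deeper combinatorial input is required beyond the Stirling-type identity stated at the outset.
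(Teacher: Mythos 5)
Your proposal is correct: I checked the coefficient-extraction identity (indeed $\binom{p+2j}{p+1-m}=\binom{p+2j}{m-1+2j}$, so $[x^{p+1}](1+x)^{p+2j}\bigl(i+(i-p)x\bigr)^{p+2-2j}$ reproduces the inner $m$-sum exactly), the endpoint values at $i=0$ and $i=p$, the conclusion $T(x)=-(-1)^pp^{p+2-2j}(1+x^{p+2-2j})$ for $j\ge 2$, and the $j=1$ bookkeeping, which gives $\mathcal N=(-1)^p\bigl(p!\binom{2p+2}{p+1}-2(p+2)p^p\bigr)=2(-1)^p\bigl(\tfrac{(2p+1)!}{(p+1)!}-(p+2)p^p\bigr)$ as claimed; the only convention you need to make explicit is $0^0\equiv 1$ (as the paper does), so that the $\ell=0$ term of the extended sum at $i=0$ is handled consistently. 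Your skeleton coincides with the paper's: both extend the $i$-sum to $0\le i\le p$, subtract the two endpoint contributions (which combine to $2(-1)^p\binom{p+2j}{2j-1}p^{p+2-2j}$), and invoke the finite-difference vanishing identity $\sum_{i=0}^p(-1)^i\binom{p}{i}i^\ell=0$ for $\ell<p$ with value $(-1)^pp!$ at $\ell=p$ (the paper's citations $(0.154)_3$--$(0.154)_4$ of Gradshteyn--Ryzhik). The genuine difference is how the residual $m$-sum in the $j=1$ case is evaluated: the paper swaps the sums, reduces to $T_{m,1}=(-1)^pp!$ for every $m$, and then needs a separate Vandermonde--Chu computation $E_2=\sum_m\binom{p+2}{m+1}\binom{p}{m}=2\binom{2p+1}{p}$ via $(0.156)_1$, whereas your generating-function packaging absorbs that step into reading off $[x^{p+1}](1+x)^{2p+2}=\binom{2p+2}{p+1}$, so no table identity beyond the vanishing identity is needed; the paper's version, in exchange, stays entirely within elementary index manipulations of the kind it cites and avoids introducing the coefficient-extraction formalism.
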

With the help of the preceding three lemmas, we are now in the position to construct the transformation matrix $Q$ to simplify the matrix $\mathcal D^{t,t_h}$. Given $\beta\in\mathbb{R}$,	let
\begin{align}\label{Q}
	Q^\beta :=&\left(
	\begin{array}{cccc}
		1 & c_1+e^{-\ii \beta}d_1 &\cdots& c_{p-1}+e^{-\ii \beta}d_{p-1}\\
		0&1&\cdots&0\\
		\vdots&\vdots &\ddots&\vdots\\
		0&0&\cdots&1
	\end{array}
	\right).
\end{align}

The following lemma implies that the congruent transform of the matrix $\mathcal D^{t,t_h}$ by $(Q^{t_h})^\mathbf{H}$ (the conjugate transpose of $Q^{t_h}$) changes the $(1,i)^{\rm th},(i,1)^{\rm th}(2\leq i\leq p)$ entries in $\DD^{t,t_h}$ to higher order terms in $t$ and $t_h$.
\begin{lemma}	\label{tildeD} The matrix
	$$\tilde{\mathcal D}^{t,t_h}:=Q^{t_h}{\mathcal D}^{t,t_h} (Q^{t_h})^\mathbf{H},$$	
	satisfies the following estimates: for $1\leq i,j\leq p-1$,
	\begin{subequations}
		\begin{align}
			&\tilde{\mathcal D}_{1,1}^{t, t}
			=\bigg(\frac{1}{2p+1}\Big(\frac{p!}{(2p)!}\Big)^2+\gamma\bigg)t^{2p+2}+\oo(t^{2p+4}),\label{dija}\\
			&\tilde{\mathcal D}_{1,j+1}^{t,t}=\oo(t^{p+2}),\qquad\qquad\qquad\tilde{\mathcal D}_{i+1,1}^{t,t}=\oo(t^{p+2}),\label{dijc}\\
			&\tilde{\mathcal D}_{i+1,j+1}^{t,t}
			=\int_0^1\lambda_i'\lambda_j'+\oo(t^2),\label{dijd}\\
			&\frac{\partial \tilde{D}^{t,t_h}_{1,1}}{\partial t_h}\bigg\vert_{t_h=t}=2t+\oo(t^{2p+1}),\\
			&\frac{\partial \tilde{D}^{t,t_h}_{1,j+1}}{\partial {t_h}}\bigg\vert_{t_h=t}=\oo(t^{p+1}),\qquad\quad\frac{\partial \tilde{D}^{t,t_h}_{i+1,1}}{\partial t_h}\bigg\vert_{t_h=t}=\oo(t^{p+1}),\\
			&\frac{\partial \tilde{D}^{t,t_h}_{i+1,j+1}}{\partial t_h}\bigg\vert_{t_h=t}=\oo(t),\label{dijf}
		\end{align}
	\end{subequations}
\end{lemma}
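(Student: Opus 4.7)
My plan is to split by linearity, writing $\tilde{\mathcal D}^{t,t_h}=\tilde{\mathcal D}_{\rm F}^{t,t_h}+\gamma\tilde{\mathcal D}_{\rm P}^{t,t_h}$, where the two summands come respectively from the $\gamma=0$ and from the $\gamma$-coefficient parts of Lemma~\ref{1deq}, and treat each piece on its own.

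For the FEM piece, the defining relations (\ref{defcidi}) for $c_i,d_i$ were chosen precisely so that a direct expansion of the congruence gives $\tilde{\mathcal D}_{{\rm F},1,j+1}^{t,t_h}=B_t(\xi_0,\lambda_j)+e^{-\ii t_h}B_t(\xi_1,\lambda_j)=0$ identically in $t,t_h$, and likewise $\tilde{\mathcal D}_{{\rm F},i+1,1}^{t,t_h}=0$. After regrouping the surviving terms and using the reflection symmetry $B_t(\xi_0,\xi_0)=B_t(\xi_1,\xi_1)$, the only remaining nonzero entry is
\[
\tilde{\mathcal D}_{{\rm F},1,1}^{t,t_h}=2B_t(\xi_0,\xi_0)+2\cos t_h\,B_t(\xi_0,\xi_1).
\]
Plugging in $\xi_0=\tfrac12(\pe-\po),\ \xi_1=\tfrac12(\pe+\po)$ from Lemma~\ref{pepo} and noting $B_t(\pe,\po)=0$ by the even/odd symmetry of $\pe,\po$ about $x=\tfrac12$, this collapses to $\cos^2(t_h/2)B_t(\pe,\pe)+\sin^2(t_h/2)B_t(\po,\po)$. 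At $t_h=t$ the leading $\mp t\sin t$ pieces in (\ref{btpepo}) cancel exactly via $\cos^2(t/2)\tan(t/2)=\sin^2(t/2)\cot(t/2)=\tfrac12\sin t$, and a short case check on the parity of $p$ (which governs whether the $N_e$- or the $N_o$-contribution is the first non-cancelled one) delivers the surviving leading term $\tfrac{1}{2p+1}\bigl[\tfrac{p!}{(2p)!}\bigr]^2 t^{2p+2}$, with everything else absorbed into $\oo(t^{2p+4})$. Differentiating the same closed form in $t_h$ and using $\tan(t/2)+\cot(t/2)=2/\sin t$ yields the derivative estimate $2t+\oo(t^{2p+1})$, while the off-diagonal derivatives inherit the identical vanishing of $\tilde{\mathcal D}_{{\rm F},1,j+1}^{t,t_h}$ after differentiation.

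For the penalty piece, the congruence does \emph{not} decouple, and this is where the main technical difficulty lies. The raw $(1,1)$ penalty entry at $t_h=t$ is only $\oo(t^2)$ (for $p$ even: $4p^{2p}\sin^2 t\cdot\gamma$) or $\oo(t^4)$ (for $p$ odd: $16p^{2p}\sin^4(t/2)\cdot\gamma$), so cancellation of every intermediate even power $t^{2k}$ with $k\le p$ is required in order to reach the stated $\gamma\,t^{2p+2}$ coefficient. The plan is to substitute the closed forms for $c_i,d_i$ from Lemma~\ref{cidi}, identify the emerging sums with $A_1$ and $A_2=(-1)^p A_1$, and then invoke the combinatorial identity of Lemma~\ref{combination}: its distinguished $j=1$ summand plays off against the uniform $j\ge 2$ family to produce exactly the telescoping cancellation of powers of $t$ that is needed. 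The same mechanism, applied to the first row/column of the penalty block, lifts the naive $\oo(t^2)$ bound to the $\oo(t^{p+2})$ required in (\ref{dijc}); the parity split in Lemma~\ref{1deq} duplicates the bookkeeping but not the conceptual effort, and the $t_h$-derivative estimates follow by differentiating the resulting expansion once in $t_h$.

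Finally, the interior block is immediate: since the lower-right $(p-1)\times(p-1)$ submatrix of $Q^{t_h}$ is the identity, $\tilde{\mathcal D}_{i+1,j+1}^{t,t_h}=\mathcal D_{i+1,j+1}^{t,t_h}=B_t(\lambda_i,\lambda_j)+2(-1)^{i+j}\binom pi\binom pj p^{2p}(1-\cos t_h)\gamma$, which at $t_h=t$ expands to $\int_0^1\lambda_i'\lambda_j'+\oo(t^2)$, with $t_h$-derivative $2(-1)^{i+j}\binom pi\binom pj p^{2p}\sin t\cdot\gamma=\oo(t)$. The central obstacle throughout is the penalty cancellation in the $(1,1)$ entry; every other assertion in the lemma follows from the same symmetry and combinatorial manipulations used there.
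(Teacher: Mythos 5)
Your proposal follows essentially the same route as the paper's proof: the orthogonality conditions \eqref{defcidi} annihilate the FEM part of the first row and column, the $(1,1)$ FEM contribution collapses to the trigonometrically weighted combination of $B_t(\pe,\pe)$ and $B_t(\po,\po)$ from Lemma~\ref{pepo} whose leading terms cancel, the interior block is untouched by $Q^{t_h}$, and the penalty contributions are controlled through the sums $A_1$, $A_2=(-1)^pA_1$ together with Lemma~\ref{combination}, which is exactly the paper's Step~1 identity showing $1+A_1=\oo\big(t^{2\lfloor p/2\rfloor}\big)$ and hence the $\oo(t^{p+2})$ off-diagonal bounds and the $\gamma t^{2p+2}$ term in the $(1,1)$ entry. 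The only difference is cosmetic — you split the $\gamma=0$ and $\gamma$-proportional parts by linearity and phrase the key cancellation slightly loosely, whereas the paper carries both parts together entry by entry — so the proposal is correct as an outline of the same argument.
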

\begin{proof}
	We divide our proof in five steps.
	
	\noindent\textbf{Step 1}. We first verify the following identity which is essential to our proof. From Lemmas~\ref{cidi} and \ref{combination}, we have
	\begin{align}
		1+A_1=&\frac{\sum_{j=1}^{\lfloor\frac p2\rfloor +1}
			\Big\{{(-1)}^jt^{-2j}
			\Big[\frac{4(p+2j)!}{(p+2-2j)!(2j-1)!}+(-1)^p
			\frac{2(p+1)!}{(p+2-2j)!p^{p+2-2j}}\mathcal{N}\Big]\Big\}}{\sum_{j=1}^{ \lfloor\frac p2\rfloor +1}{(-1)}^jt^{-2j}\frac{4(p+2j)!}{(p+2-2j)!(2j-1)!}}\nonumber\\
		=&\frac{-t^{-2}\frac{4(2p+1)!}{p!p^p}}{\sum_{j=1}^{\lfloor\frac  p2\rfloor +1}{(-1)}^jt^{-2j}\frac{4(p+2j)!}{(p+2-2j)!(2j-1)!}}\nonumber\\
		=&(-1)^{N_e}\frac{(2p+1)!(2N_e+1)!(p-2N_e)!}{p!(p+2+2N_e)!}p^{-p}{t}^{2N_e}+\oo({t}^{2N_e+2})\nonumber\\
		=&\left\{
		\begin{array}{ll}
			\frac{\ii^p}{2} p^{-p}{t}^{p}+\oo({t}^{p+2}),&\qquad p\text{ even},\vspace{2mm}\\
			\ii^{p-1}p^{-p}{t}^{p-1}+\oo({t}^{p+1}),&\qquad p\text{ odd}.
		\end{array}\right.\label{1pa1}
	\end{align}
	
	\noindent\textbf{Step 2}. Next, we derive the expressions for $\tilde{\mathcal D}_{1,j+1}^{t,t_h}$ and $\tilde{\mathcal D}_{1,1}^{t,t_h}$ provided that  $p$ is even.  Noting that $B_t$ is symmetric, and $\xi_i(i=0,1)$ is orthogonal to $\lambda_i(1\leq i\leq p-1)$ (see \eqref{defcidi}), from  \eqref{Q}, Lemmas~\ref{1deq} and \ref{cidi}, we conclude that
	\begin{align}
		\tilde{\mathcal D}_{1,j+1}^{t,t_h}=&\mathcal{D}^{t,t_h}_{1,j+1}+\sum_{i=1}^{p-1}\left((c_i+e^{-\ii t_h}d_i)\mathcal{D}_{i+1,j+1}^{t,t_h}\right)\nonumber\\
		=&\bigg(B_t(\lambda_j,\lambda_0)+\sum_{i=1}^{p-1}c_iB_t(\lambda_j,\lambda_i)\bigg)+
		e^{-\ii t_h}\bigg(B_t(\lambda_{j},\lambda_p            )+\sum_{i=1}^{p-1}d_iB_t(\lambda_j,\lambda_i)\bigg)\nonumber\\
		&+{(-1)}^j\tbinom pjp^{2p}\Bigg((1+e^{-\ii t_h}-e^{\ii t_h}-e^{-2\ii t_h})\nonumber\\
		&+2(1-\cos t_h)\bigg(\sum_{i=1}^{p-1}{(-1)}^i c_i\tbinom pi+e^{-\ii t_h}\sum_{i=1}^{p-1}{(-1)}^i d_i\tbinom pi\bigg)\Bigg)\gamma\nonumber\\
		=&B_t(\lambda_j,\xi_0)+e^{-\ii t_h}B_t(\lambda_j,\xi_1)\nonumber\\
		&+2{(-1)}^j\tbinom pj(1-\cos t_h)\left(\frac{1+e^{-\ii t_h}-e^{\ii t_h}-e^{-2\ii t_h}}{2-2\cos t_h}+(1+e^{-\ii t_h})A_1\right)p^{2p}\gamma\nonumber\\
		=&2{(-1)}^j\tbinom pj(1-\cos t_h)(1+e^{-\ii t_h})\left(1+A_1\right)p^{2p}\gamma.\label{a1j}
	\end{align}
	According to Lemma~\ref{1deq} and the definition of $\tilde{\mathcal D}^{t,t_h}$, we could easily derive that
	
	\begin{align}\label{ai1}
		\tilde{\mathcal D}_{i+1,1}^{t,t_h}
		&=2{(-1)}^i\tbinom pi(1-\cos t_h)(1+e^{\ii t_h})\left(1+A_1\right)p^{2p}\gamma,\\
		\tilde{\mathcal D}_{i+1,j+1}^{t,t_h}&=B_t(\lambda_i,\lambda_j)+2{(-1)}^{i+j}\tbinom pi\tbinom pj (1-\cos t_h)p^{2p}\gamma. \label{aij}
	\end{align}
	
	From \eqref{Q}, \eqref{a11}, Lemmas~\ref{1deq},\ref{pepo} and \ref{cidi}, and the fact $B_t(\lambda_0,\lambda_0)=B_t(\lambda_p,\lambda_p)$, we have
	\begin{align}\label{a11}
		\tilde{\mathcal D}_{1,1}^{t,t_h}=& \mathcal{D}_{1,1}^{t,t_h}+\sum_{i=1}^{p-1}(c_i+e^{-\ii t_h}d_i)\mathcal{D}_{i+1,1}^{t,t_h}+\sum_{i=1}^{p-1}(c_i+e^{\ii t_h}d_i)\tilde{\mathcal D}_{1,i+1}^{t,t_h}\nonumber\\
		=&B_t(\lambda_0,\lambda_0)+\sum_{i=1}^{p-1}c_iB_t(\lambda_i,\lambda_0)+B_t(\lambda_p,\lambda_p)
		+\sum_{i=1}^{p-1}d_iB_t(\lambda_i,\lambda_p)\nonumber\\
		&+e^{\ii t_h}\Big(B_t(\lambda_0,\lambda_p)
		+\sum_{i=1}^{p-1}c_iB_t(\lambda_i,\lambda_p)\Big)+e^{-\ii t_h}\Big(B_t(\lambda_p,\lambda_0)
		+\sum_{i=1}^{p-1}d_iB_t(\lambda_i,\lambda_0)\Big)\nonumber\\
		&+p^{2p}\bigg(2(1-\cos 2t_h)+(1+e^{\ii t_h}-e^{-\ii t_h}-e^{2\ii t_h})\sum_{j=1}^{p-1}{(-1)}^j\tbinom pj(c_j+e^{-\ii t_h}d_j)\nonumber\\
		&+(2-2\cos t_h)(1+e^{-\ii t_h})(1+A_1)\sum_{j=1}^{p-1}{(-1)}^j\tbinom pj(c_j+e^{\ii t_h}d_j)\bigg)\gamma\nonumber\\
		=&B_t(\xi_0,\lambda_0)+B_t(\xi_1,\lambda_p)+e^{\ii t_h}B_t(\xi_0,\lambda_p)+e^{-\ii t_h}B_t(\xi_1,\lambda_0)+p^{2p}2(1-\cos {2t_h})\gamma\nonumber\\
		&\times\bigg(1+\frac{1+e^{\ii t_h}-e^{-\ii t_h}-e^{2\ii t_h}}{2-2\cos {2t_h}}(1+e^{-\ii t_h})A_1\nonumber\\
		&\quad+\frac{1-\cos {t_h}}{1-\cos {2t_h}}(1+e^{-\ii t_h})(1+e^{\ii t_h})(1+A_1)A_1\bigg)\nonumber\\
		=&B_t(\xi_0,\xi_0)+B_t(\xi_1,\xi_1)+2\cos t_hB_t(\xi_0,\xi_1)+p^{2p}2(1-\cos {2t_h}){(1+A_1)}^2\gamma\nonumber\\
		=&\frac{1+\cos t_h}{2}B_t(\pe,\pe)+
		\frac{1-\cos t_h}{2}B_t(\po,\po)+2p^{2p}(1-\cos {2t_h}){(1+A_1)}^2\gamma.
	\end{align}
	
	\noindent\textbf{Step 3}. Notice that $B_t(\pe,\pe)$, $B_t(\po,\po)$, $A_1$ are independent of $t_h$, thus it follows that
	\begin{align*}
		\frac{\partial \tilde{D}^{t,t_h}_{1,1}}{\partial t_h}\bigg\vert_{t_h=t} &=\frac12\sin tB_t(\po,\po)-\frac12\sin tB_t(\pe,\pe)+4p^{2p}\sin 2t{(1+A_1)}^2\gamma\\
		\frac{\partial \tilde{D}^{t,t_h}_{1,j+1}}{\partial t_h}\bigg\vert_{t_h=t}
		&=2(-1)^j\tbinom{p}{j} \ii (e^{-2\ii t}-\cos t)(1+A_1)p^{2p}\gamma,\\
		\frac{\partial \tilde{D}^{t,t_h}_{i+1,1}}{\partial t_h}\bigg\vert_{t_h=t}
		&=2(-1)^i\tbinom{p}{i} \ii (\cos t-e^{2\ii t})(1+A_1)p^{2p}\gamma,\\
		\frac{\partial \tilde{D}^{t,t_h}_{i+1,j+1}}{\partial t_h}\bigg\vert_{t_h=t}
		&=2{(-1)}^{i+j}\tbinom pi\tbinom pj \sin t p^{2p}\gamma.
	\end{align*}
	
	\noindent\textbf{Step 4}.  Next we complete the proofs of \eqref{dija}--\eqref{dijf} for even $p$. Using the identities $(1-\cos t)\cot{\frac t2}-(1+\cos t)\tan{\frac t2}=0$ and $\cot \frac{t}{2}+\tan \frac{t}{2}=2\csc t$, \eqref{btpepo} in Lemma~\ref{pepo}, \eqref{1pa1} in Step 1,  along with the Taylor expansions of some elementary functions (e.g. $\cos t =1-\frac 12 t^2+\oo(t^4)$), it follows from  Step 2 with $t_h=t$ and Step 3 that
	\begin{align*}
		\tilde{\mathcal D}_{1,1}^{t,t}=&-\frac12(1+\cos t)\bigg(2t\tan \frac{t}{2}-\left[\frac{(p+1)!}{(2p+2)!}\right]^2\frac{t^{2p+4}}{2p+3}+\oo(t^{2p+6})\bigg)\\
		&+\frac12(1-\cos t)\bigg(2t\cot \frac{t}{2}+4\left[\frac{p!}{(2p)!}\right]^2\frac{t^{2p}}{2p+1}+\oo(t^{2p+2})\bigg)\\
		&+2p^{2p}(1-\cos {2t}){(1+A_1)}^2\gamma\\
		=&\bigg(\frac{1}{2p+1}\left[\frac{p!}{(2p)!}\right]^2+\gamma\bigg)t^{2p+2}+\oo(t^{2p+4}).\\
		\tilde{\mathcal D}_{1,j+1}^{t,t}
		=&{(-1)}^j {\ii}^p \tbinom pj p^p \gamma t^{p+2}+\oo(t^{p+4})=\oo(t^{p+2}),\\
		\tilde{\mathcal D}_{i+1,1}^{t,t}
		=&{(-1)}^i {\ii}^p \tbinom pi p^p \gamma t^{p+2}+\oo(t^{p+4})=\oo(t^{p+2}),\\
		\tilde{\mathcal D}_{i+1,j+1}^{t,t}
		=&\int_0^1\lambda_i'\lambda_j'+\oo(t^2),\\
		\frac{\partial \tilde{D}^{t,t_h}_{1,1}}{\partial t_h}\bigg\vert&_{t_h=t} 
		=\frac 12 \sin t\left(2t\cot \frac t2+\oo(t^{2p})\right)\\
		&\qquad\;\;\,-\frac 12 \sin t\left(-2t\tan \frac t2+\oo(t^{2p+4})\right)+\oo(t^{2p+1})\\
		&\quad\;\;\,=2t+\oo(t^{2p+1}),\\
		\frac{\partial \tilde{D}^{t,t_h}_{1,j+1}}{\partial t_h}\bigg\vert&_{t_h=t}
		=2(-1)^j\tbinom pj (\ii p)^p\gamma t^{p+1}+\oo(t^{p+2})=\oo(t^{p+1}),\\
		\frac{\partial \tilde{D}^{t,t_h}_{i+1,1}}{\partial t_h}\bigg\vert&_{t_h=t}
		=2(-1)^i\tbinom pi (\ii p)^p\gamma t^{p+1}+\oo(t^{p+2})=\oo(t^{p+1}),\\
		\frac{\partial \tilde{D}^{t,t_h}_{i+1,j+1}}{\partial t_h}\bigg\vert&_{t_h=t}
		=2{(-1)}^{i+j}\tbinom pi\tbinom pj p^{2p}\gamma t+\oo(t^3)=\oo(t).
	\end{align*}
	
	\noindent\textbf{Step 5}. The cases for odd $p$ can be proved by following similar lines in Steps 2--4 and are omitted. 
	This completes the proof of the theorem.
\end{proof}

The following lemma gives some results of implicit function theorem, which will used to prove the existence of a discrete wave number near the exact wave number and to estimate the phase error.

\begin{lemma}\label{dispersion}
	Let $F$ be a binary continuous function on the rectangle $[0, b_1]\times [0,b_2]$ for some positive $b_1$ and $b_2$ with the following properties: 
	\begin{align}
		&F(s,s)=\delta_0s^{\sigma_1}+o(s^{\sigma_1}),\label{cond1}\\
		&F_1^{\prime}(s,s)=\delta_1 s^{\sigma_2}+o(s^{\sigma_2}),\quad\text{as } s\to 0^+,\label{cond2}\\
		&F_1'' \text{is continuous at }(0,0),
	\end{align}
	where  $\sigma_1 >2\sigma_2>0, \delta_0$, and $\delta_1\neq 0$ are some constants independent of $s$.  Then there exists a constant $s_0>0$ such that for any $s\in(0,s_0]$, there exists a $s_h>0$ such that
	\begin{align*}
		F(s_h,s)&=0\quad\text{and}\quad
		|s-s_h|=\Big(\Big|\frac{\delta_0}{\delta_1}\Big|+o(1)\Big)s^{\sigma_1-\sigma_2}.
	\end{align*}
\end{lemma}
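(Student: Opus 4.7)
\medskip

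\noindent\textbf{Proof proposal.} The plan is to treat $s$ as a small fixed parameter and look for a zero of the one-variable function $y\mapsto F(y,s)$ near $y=s$ via a Banach fixed-point argument, since $F(s,s)$ is much smaller than the factor $F_1'(s,s)$ that controls the local slope. First I would let $r:=F(s,s)/F_1'(s,s)$. Hypotheses \eqref{cond1}--\eqref{cond2} give
\eqn{r=\frac{\delta_0}{\delta_1}\,s^{\sigma_1-\sigma_2}+o(s^{\sigma_1-\sigma_2}),\qquad s\to 0^+,}
which is the expected order of $|s_h-s|$. A natural Newton-type map is
\eqn{T(y):=y-\frac{F(y,s)}{F_1'(s,s)},}
whose fixed points are exactly the zeros of $F(\cdot,s)$, and which satisfies $T(s)=s-r$, so $|T(s)-s|=|r|$.

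Next I would show $T$ is a contraction on the closed interval $I_s:=[s-2|r|,\,s+2|r|]$ for all sufficiently small $s$. Writing $T'(y)=\bigl(F_1'(s,s)-F_1'(y,s)\bigr)/F_1'(s,s)$ and applying the mean value theorem together with the continuity of $F_1''$ at $(0,0)$ (hence its boundedness on a neighborhood) yields $|T'(y)|\le C|y-s|/|F_1'(s,s)|$ for $y$ near $s$. On $I_s$ this gives $|T'(y)|\le 2C|r|/|F_1'(s,s)|=\mathcal{O}(s^{\sigma_1-2\sigma_2})$, and the crucial hypothesis $\sigma_1>2\sigma_2$ forces this to tend to $0$. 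So for some $s_0>0$ and every $s\in(0,s_0]$ the map $T$ is $\tfrac12$-Lipschitz on $I_s$, and $|T(y)-s|\le |T(y)-T(s)|+|T(s)-s|\le \tfrac12\cdot 2|r|+|r|=2|r|$ shows $T(I_s)\subset I_s$.

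Banach's fixed point theorem then produces a unique $s_h\in I_s$ with $F(s_h,s)=0$. Finally the quantitative estimate follows from
\eqn{|s_h-(s-r)|=|T(s_h)-T(s)|\le \tfrac12|s_h-s|\le |r|,}
which, combined with the first-step expression for $r$, rearranges to $s_h-s=-r\bigl(1+o(1)\bigr)$ and hence
\eqn{|s-s_h|=\Bigl(\Bigl|\tfrac{\delta_0}{\delta_1}\Bigr|+o(1)\Bigr)s^{\sigma_1-\sigma_2}.}

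The step I expect to be the main obstacle, and the only place where the unusual assumption $\sigma_1>2\sigma_2$ is used, is verifying that the Lipschitz constant of $T$ on $I_s$ shrinks with $s$: this requires the size of the interval $I_s$ (of order $s^{\sigma_1-\sigma_2}$) to be small compared with the scale $s^{\sigma_2}$ on which $F_1'$ varies appreciably, and the inequality $\sigma_1-\sigma_2>\sigma_2$ is exactly what makes the ratio $s^{\sigma_1-2\sigma_2}$ tend to zero. Everything else is routine Taylor expansion and implicit-function-theorem bookkeeping.
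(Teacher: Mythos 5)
Your argument is correct in substance but takes a genuinely different route from the paper. The paper first obtains existence of $s_h$ by checking that $F(s+s^{\sigma_1/2},s)>0$ and $F(s-s^{\sigma_1/2},s)<0$ (second-order Taylor expansion at $s$ plus boundedness of $F_1''$ near the origin) and invoking the intermediate value theorem; it then extracts the asymptotics from the exact expansion $0=F(s,s)+F_1'(s,s)(s_h-s)+\frac12F_1''\big(s+\theta(s_h-s),s\big)(s_h-s)^2$, treating the cases $F_1''=0$ and $F_1''\neq0$ separately via the quadratic formula. Your Newton-type map $T(y)=y-F(y,s)/F_1'(s,s)$ with frozen slope delivers existence, local uniqueness in $I_s$ (which the paper's IVT argument does not give), and the asymptotics in one stroke, and it uses the hypothesis $\sigma_1>2\sigma_2$ in exactly the same essential place: the interval of interest has length $\sim s^{\sigma_1-\sigma_2}$, small compared with the scale $s^{\sigma_2}$ of $F_1'$.

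One quantitative slip should be repaired in the last step: the displayed inequality $|s_h-(s-r)|\le\frac12|s_h-s|\le|r|$ only yields $\frac23|r|\le|s_h-s|\le2|r|$, which is comparability, not $s_h-s=-r\big(1+o(1)\big)$. The fix is already in your own estimates: you showed the Lipschitz constant of $T$ on $I_s$ is $L_s=\mathcal{O}(s^{\sigma_1-2\sigma_2})=o(1)$, so $|s_h-(s-r)|=|T(s_h)-T(s)|\le L_s|s_h-s|\le 2L_s|r|=o(|r|)$, and hence $|s-s_h|=|r|\big(1+o(1)\big)=\big(\big|\frac{\delta_0}{\delta_1}\big|+o(1)\big)s^{\sigma_1-\sigma_2}$ when $\delta_0\neq0$; if $\delta_0=0$ the same bound gives $|s-s_h|=o(s^{\sigma_1-\sigma_2})$, which is what the statement asserts in that case. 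With that adjustment the proof is complete.
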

\begin{proof}
	Without loss of generality, we assume that $\delta_1>0$. For $s>0$ sufficiently small, the Taylor series expansion of $\tilde{F}_s$ at $s$ gives
	\begin{align*}
		F\big(s+s^{\frac{\sigma_1}{2}},s\big)&=F(s,s)+F_1'(s,s)s^{\frac{\sigma_1}{2}}+\frac{1}{2}F_1''\big(s+\theta_1 s^{\frac{\sigma_1}{2}},s\big)s^{\sigma_1}\\
		&=\big(\delta_1+o(1)\big)s^{\frac{\sigma_1}{2}+\sigma_2}>0,\\
		F\big(s-s^{\frac{\sigma_1}{2}},s\big)&=F(s,s)-F_1'(s,s)s^{\frac{\sigma_1}{2}}+\frac{1}{2}F_1''\big(s-\theta_1 s^{\frac{\sigma_1}{2}},s\big)s^{\sigma_1}\\
		&=-\big(\delta_1+o(1)\big)s^{\frac{\sigma_1}{2}+\sigma_2}<0.
	\end{align*}
	where $\theta_1,\theta_2\in (0,1)$.
	Since $F$ is continuous, there exists $s_h\in\big(s-s^{\frac{\sigma_1}{2}},s+s^{\frac{\sigma_1}{2}}\big)$ such that
	\begin{align}
		F(s_h,s)&=0\quad\text{and}\quad |s-s_h|=o(s).\label{disp1}
	\end{align}
	
	Furthermore, \eqref{disp1} reveals that
	\begin{align*}
		0&=F(s_h,s)=F(s,s)+F_1'(s,s)(s_h-s)+\frac{1}{2}F_1''\big(s+\theta(s_h-s),s\big)(s_h-s)^2,
	\end{align*}
	for some $\theta\in(0,1)$. If $F_1''\big(s+\theta(s_h-s),s\big)=0$, we could directly derive that
	$$|s_h-s|=\left|\frac{F(s,s)}{F_1'(s,s)}\right|=\Big|\frac{\delta_0}{\delta_1}\Big|s^{\sigma_1-\sigma_2}+o(s^{\sigma_1-\sigma_2}).$$
	
	If $F_1''\big(s+\theta(s_h-s),s\big)\neq 0$, denote by $\delta_2:=F_1''(0,0)$ and by 
	$G:=F(s,s)F_1''\big(s+\theta(s_h-s),s\big)/\big(F_1'(s,s)\big)^2$. Noting that $G=\frac{\delta_0\delta_2}{\delta_1^2}s^{\sigma_1-2\sigma_2}+o(s^{\sigma_1-2\sigma_2})<\frac12$ for $s>0$ sufficiently small, from the quadratic formula, we come to
	\begin{align*}
		s_h^{\pm}-s&=\frac{-F_1'(s,s)\pm\sqrt{\big(F_1'(s,s)\big)^2-2F(s,s)F_1''\big(s+\theta(s_h-s),s\big)}}{F_1''\big(s+\theta(s_h-s),s\big)},\\
		s_h^+-s&=\frac{-2F(s,s)}{F_1'(s,s)+\sqrt{\big(F_1'(s,s)\big)^2-2F(s,s)F_1''\big(s+\theta(s_h-s),s\big)}}\\
		&=\frac{-2F(s,s)}{F_1'(s,s)\big(1+\sqrt{1-2G}\big)}=-\frac{\delta_0}{\delta_1}s^{\sigma_1-\sigma_2}+o(s^{\sigma_1-\sigma_2}).
	\end{align*}
	thus for $s>0$ sufficiently small, there exists a $s_h>0$, such that
	$|s_h-s|=\big|\frac{\delta_0}{\delta_1}\big|s^{\sigma_1-\sigma_2}+o(s^{\sigma_1-\sigma_2})$, the proof is then completed.
\end{proof}

We are now in the position to introduce the main result of this paper.
\begin{theorem}\label{main}
	When solving the one-dimensional Helmholtz equation \eqref{cip_fem} with $p^{\rm th}$ order CIP-FEM, there exists a constant $C_0>0$ such that if $\frac{kh}{p}\le C_0$, we have the following estimate for phase difference.	
	\begin{align*}
		|k-k_h|=\frac 12\bigg(\frac{1}{(2p+1)}\left[\frac{p!}{(2p)!}\right]^2+\gamma\bigg)k^{2p+1}h^{2p}+\oo(k^{2p+3}h^{2p+2}),
	\end{align*}
	
	As a consequence, taking the penalty parameter as $$\gamma_0=-\frac{1}{(2p+1)}\left[\frac{p!}{(2p)!}\right]^2$$
	can reduce the phase difference of CIP-FEM to $\oo(k^{2p+3}h^{2p+2})$.
	
\end{theorem}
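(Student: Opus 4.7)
The plan is to recast the dispersion relation $\mathbf{Det}(\mathcal{D}^{t,t_h})=0$ as a scalar equation amenable to Lemma~\ref{dispersion}, applied to the transformed matrix of Lemma~\ref{tildeD}. Since $Q^{t_h}$ is unit upper triangular, $\det Q^{t_h}=1$ and hence $\det\tilde{\mathcal{D}}^{t,t_h}=\det\mathcal{D}^{t,t_h}$, so setting $F(t_h,t):=\det\tilde{\mathcal{D}}^{t,t_h}$ the admissible $t_h$'s are exactly the zeros of $F(\cdot,t)$. I will compute the leading behavior of $F(t,t)$ and $F_1'(t,t)$, invoke Lemma~\ref{dispersion}, and then bootstrap the resulting estimate to the sharp $\oo(t^{2p+3})$ remainder.

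To evaluate $F$ along the diagonal, I partition
$$\tilde{\mathcal{D}}^{t,t_h}=\begin{pmatrix}\tilde{\DD}_{1,1}^{t,t_h}&\mathbf{b}(t,t_h)^{\mathbf{H}}\\ \mathbf{b}(t,t_h)&M(t,t_h)\end{pmatrix},$$
and use the Schur complement identity $F=\det(M)\bigl(\tilde{\DD}_{1,1}-\mathbf{b}^{\mathbf{H}}M^{-1}\mathbf{b}\bigr)$. At $t_h=t$, Lemma~\ref{tildeD} gives $M(t,t)=\bigl(\int_0^1\lambda_i'\lambda_j'\bigr)_{i,j}+\oo(t^2)$, so by Lemma~\ref{positive} $\det M(t,t)=\alpha_0+\oo(t^2)$ with $\alpha_0>0$, while the bordering entries are $\oo(t^{p+2})$ and therefore $\mathbf{b}^{\mathbf{H}}M^{-1}\mathbf{b}=\oo(t^{2p+4})$; combining with the $(1,1)$ estimate yields
$$F(t,t)=\alpha_0\Bigl(\tfrac{1}{2p+1}\bigl[\tfrac{p!}{(2p)!}\bigr]^2+\gamma\Bigr)t^{2p+2}+\oo(t^{2p+4}).$$
Differentiating the Schur form with respect to $t_h$ and using the derivative bounds of Lemma~\ref{tildeD}, the dominant contribution is $\partial_{t_h}\tilde{\DD}_{1,1}\cdot\det M=(2t+\oo(t^{2p+1}))(\alpha_0+\oo(t^2))$; all other terms are absorbed owing to the $\oo(t^{p+1})$ derivative bounds on the bordering vector, producing $F_1'(t,t)=2\alpha_0 t+\oo(t^{2p+1})$. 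Since $F$ is polynomial in the smooth entries of $\tilde{\mathcal{D}}^{t,t_h}$, $F_1''$ is continuous near $(0,0)$.

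I then invoke Lemma~\ref{dispersion} with $\sigma_1=2p+2$, $\sigma_2=1$ (so $\sigma_1>2\sigma_2$ for every $p\ge 1$), $\delta_0=\alpha_0\bigl(\tfrac{1}{2p+1}[\tfrac{p!}{(2p)!}]^2+\gamma\bigr)$, $\delta_1=2\alpha_0$, which yields a solution $t_h$ of $F(t_h,t)=0$ with $|t-t_h|=\oo(t^{2p+1})$ for all sufficiently small $t=kh$; this identifies $k_h=t_h/h$. To upgrade to the $\oo(t^{2p+3})$ remainder stated in the theorem, I substitute this crude bound into the second-order Taylor expansion
$$0=F(t,t)+F_1'(t,t)(t_h-t)+\tfrac{1}{2}F_1''(\eta,t)(t_h-t)^2,$$
so that $t-t_h=F(t,t)/F_1'(t,t)+\oo\bigl((t_h-t)^2/t\bigr)$. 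Dividing the expansions of $F(t,t)$ and $F_1'(t,t)$ gives $F(t,t)/F_1'(t,t)=\tfrac{1}{2}\bigl(\tfrac{1}{2p+1}[\tfrac{p!}{(2p)!}]^2+\gamma\bigr)t^{2p+1}+\oo(t^{2p+3})$, while the quadratic bootstrap term is $\oo(t^{4p+1})\subset\oo(t^{2p+3})$ for $p\ge 1$. Substituting $t=kh$ and $k-k_h=(t-t_h)/h$ produces the claimed expansion, and choosing $\gamma=\gamma_0$ cancels the leading coefficient.

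The main obstacle is the final bootstrapping step: Lemma~\ref{dispersion} only delivers an $o(t^{2p+1})$ remainder, so upgrading to $\oo(t^{2p+3})$ requires verifying that the subleading term of $F(t,t)$ is genuinely $\oo(t^{2p+4})$ rather than $\oo(t^{2p+3})$. This even-power structure is inherited from the parity of the $B_t$-expansions in Lemma~\ref{pepo} and from the fact that the Schur correction $\mathbf{b}^{\mathbf{H}}M^{-1}\mathbf{b}$ first contributes at order $t^{2p+4}$; the careful bookkeeping of these cancellations, together with the odd-power Taylor pattern of $\partial_{t_h}\tilde{\DD}_{1,1}$, is the most delicate part of the argument.
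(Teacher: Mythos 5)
Your proposal is correct and takes essentially the same route as the paper: after the congruence transform of Lemma~\ref{tildeD} you expand $\det\tilde{\mathcal D}^{t,t_h}$ about the $(1,1)$ entry (your Schur-complement factorization is equivalent to the paper's Laplace cofactor expansion), use Lemma~\ref{positive} to get $\alpha_0>0$, and invoke Lemma~\ref{dispersion}, exactly as in the paper (which additionally rescales to $\tilde t=t/p$). Your explicit Taylor bootstrap to reach the $\oo(t^{2p+3})$ remainder is a sound, slightly more careful rendering of the step the paper attributes directly to Lemma~\ref{dispersion}; the only minor slip is that $F_1'(t,t)=2\alpha_0 t+\oo(t^{3})$ rather than $2\alpha_0 t+\oo(t^{2p+1})$ (the $\oo(t^2)$ correction in $\det M$ enters at order $t^3$), which does not affect the final estimate since the quotient $F(t,t)/F_1'(t,t)$ still has remainder $\oo(t^{2p+3})$.
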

\begin{proof}
	Let $\tilde t=\frac tp$, $\tilde t_h=\frac {t_h}{p}$, and
	$$\tilde{\mathcal F}(\tilde t_h,\tilde t,\gamma):= \mathcal F(t_h,t,\gamma):=\mathbf{Det}({\mathcal D}^{t,t_h}).$$
	notice that
	\begin{align}\label{Ftt}
		\tilde{\mathcal F}(\tilde t_h,\tilde t,\gamma)=\mathbf{Det}({\mathcal D}^{t,t_h})=\mathbf{Det}(Q^{t_h}{\mathcal D}^{t,t_h} (Q^{t_h})^\mathbf{H})=\mathbf{Det}(\tilde{\mathcal D}^{t,t_h}).
	\end{align}
	
	For any square matrix $A$, let $A_{i,j}^*$ denotes the  cofactor of the $(i,j)^{\rm{th}}$ entry of $A$, $\tilde D_0$ be the submatrix of $\tilde{\mathcal D}^{t,t_h}$ obtained by removing the first row and column of it.
	In order to apply Lemma~\ref{dispersion}, we need the following deductions. Applying the Laplace expansion for determinant, we have
	\begin{align*}
		\tilde{\mathcal F}(\tilde t_h,\tilde t,\gamma)=&\tilde{D}^{t,t_h}_{1,1}\left(\tilde D^{t,t_h}_{1,1}\right)^*+\sum_{j=1}^{p-1}\tilde{D}_{1,j+1}^{t,t_h}\big(\tilde{D}_{1,j+1}^{t,t_h}\big)^*\\
		=&\tilde{D}^{t,t_h}_{1,1} \left(\tilde D^{t,t_h}_{1,1}\right)^*-\sum_{j=1}^{p-1}\tilde{D}^{t,t_h}_{1,j+1}\sum_{i=1}^{p-1}\tilde{D}^{t,t_h}_{i+1,1}\big(\tilde D_0\big)_{i,j}^*,
	\end{align*}
	by the chain rule for derivative,
	\begin{align*}
		\tilde{\mathcal F}_1'(\tilde t_h,\tilde t,\gamma)=&\mathcal F_1'(t_h,t,\gamma)\frac{\partial t_h}{\partial \tilde t_h}\\
		=&p\Bigg(\frac{\partial \tilde{D}^{t,t_h}_{1,1}}{\partial t_h}\left(\tilde D^{t,t_h}_{1,1}\right)^*+\tilde{D}^{t,t_h}_{1,1}\frac{\partial \left(\tilde D^{t,t_h}_{1,1}\right)^*}{\partial t_h}-\sum_{i,j=1}^{p-1}\bigg(\frac{\partial \tilde{D}^{t,t_h}_{1,j+1}}{\partial t_h}\tilde{D}^{t,t_h}_{i+1,1}\big(\tilde D_0\big)_{i,j}^*\\
		&+\tilde{D}^{t,t_h}_{1,j+1}\frac{\partial \tilde{D}^{t,t_h}_{i+1,1}}{\partial t_h}\big(\tilde D_0\big)_{i,j}^*+\tilde{D}^{t,t_h}_{1,j+1}\tilde{D}^{t,t_h}_{i+1,1}\frac{\partial \big(\tilde D_0\big)_{i,j}^*}{\partial t_h}\bigg)\Bigg).
	\end{align*}
	
	From Lemmas~\ref{positive} and \ref{tildeD} and  the definition of determinant, we derive that
	
	\begin{align*}
		\tilde{\mathcal F}(\tilde t,\tilde t,\gamma)=&\bigg(\bigg(\frac{1}{2p+1}\left[\frac{p!}{(2p)!}\right]^2+\gamma\bigg)t^{2p+2}+\oo(t^{2p+4})\bigg)\left(\alpha_0+\oo(t^2)\right)+\oo(t^{2p+4})\\
		=&\alpha_0\bigg(\frac{1}{2p+1}\left[\frac{p!}{(2p)!}\right]^2+\gamma\bigg)p^{2p+2}\tilde t^{2p+2}+\oo(\tilde t^{2p+4}),\\
		\tilde{\mathcal F}_1'(\tilde t,\tilde t,\gamma)=&p\left(2t+\oo(t^{2p+1})\right)\left(\alpha_0+\oo(t^2)\right)+\oo(t^{2p+3})\\
		=&2\alpha_0p^2\tilde t+o(\tilde t).
	\end{align*}
	
	Taking a close observation of $\tilde{D}^{t,t_h}$,  we find that all the entries $\tilde{D}^{t,t_h}_{i,j}$ are $\mathbb C^\infty$ on $(t_h,t)$, so is $\tilde{\mathcal F}_1''$. Therefore, from Lemma ~\ref{dispersion},
	for $\tilde t$ sufficiently small, there exists $\tilde t_h=\frac{k_hh}{p}>0$ satisfying
	$$|k-k_h|=\frac ph|\tilde t_h-\tilde t|=\frac{1}{2}\bigg(\frac{1}{2p+1}\left[\frac{p!}{(2p)!}\right]^2+\gamma\bigg)k^{2p+1}h^{2p}+\oo(k^{2p+3}h^{2p+2}).$$
	this completes the proof of the theorem.
\end{proof}

\begin{remark}\label{remark_b}
	{\rm (a)} Taking $\gamma=0$ in 	Theorem~\ref{main}, the CIP-FEM degenerates to FEM and the corresponding phase error we deduced coincides with that of \cite{MA1}.

	{\rm (b)} Given $k,h,p$, let $\gamma^{opt}$ be the solution to \eqref{Ftt} in Theorem~\ref{main}, then $\gamma^{opt}$ is the optimal penalty parameter with which the CIP-FEM scheme \eqref{cip_fem} is pollution free in one-dimensional case, while an explicit form of $\gamma^{opt}$ for general $p$ is hard to find. For readers who may be interested, we list in Table~\ref{optprec} expressions of $\gamma^{opt}$ for $p=1,2,3,4$ (see also \cite{cip3}) and list in Table~\ref{opt} double-precision numerical approximations of $\gamma_0$ and $\gamma^{opt}$ for $\frac{kh}{p}=1$ for $p=1,2,\cdots,7$ as comparison.
	
	{\rm (c)} Adding more penalty terms (e.g. on jumps of normal derivatives ordered from $1$ to $p-1$) may also eliminate the pollution error for problems in 1D or further improve the pollution error    for problems in higher dimensions (see e.g. \cite{cip3}), while it is also hard to find explicit forms of the penalty parameters for general $p$.  
	
	{\rm (d)} In theoretical analysis, we require $\frac{kh}{p}$ to be sufficiently small. However, we'll see in Section 5 that the penalty parameter we derived in Theorem~\ref{main} behaves quite well under the assumption of $\frac{kh}{p}\approx 1$.
	
	{\rm (e)} Theorem~\ref{main} implies that the phase difference may be improved to be $|k-k_h|=\oo(k^{2p+3}h^{2p+2})$ if we take $\gamma=\gamma_0$. One may be interested in the coefficient (denoted by $c_p$) in this big $\oo$ term. In Table~\ref{cp}, we list $c_p$ for $p=1,2,\cdots, 8$, which is calculated by programming in MATLAB. We found that they obey the following formula
	\begin{align*}
		c_p=\frac{\gamma_0}{24}\left|\frac{r_p}{(2p+3)!!}-1\right|\quad\text{where}\quad r_p=\begin{cases}
			9&\text{if } p=1\\
			24(2p-3)!!&\text{if } p\geq2,
		\end{cases}
	\end{align*}
	and hence
	$$|k-k_h^{\gamma_0}|=\frac{\gamma_0}{24}\left|\frac{r_p}{(2p+3)!!}-1\right|k^{2p+3}h^{2p+2}+\oo(k^{2p+5}h^{2p+4}).$$
	
	We conjecture that the above formula holds also for general $p$, which has actually been verified for $p$ up to $13$ via MATLAB programming, although we can not prove it yet.
\end{remark}

\begin{table}[tbhp]
	{\footnotesize
		\caption{Explicit expressions of $\gamma^{opt}$ for CIP-FEM of order $p=1,2,3,4$.}\label{optprec}
		\begin{center}
			\begin{tabular}{|c|c|}\hline
				p&$\gamma^{opt}$\\\hline
				1&$\begin{matrix}
					\big((6+t^2) \cos(t)-6 +2t^2\big)\big/12\big/(1-\cos(t))^2 
				\end{matrix}$\\\hline
				2&$\begin{matrix}
					\big((240+ 16 t^2 + t^4 )\cos(t) 
					+ 104 t^2 - 3 t^4 - 240\big)\big/\big((960 t^2 - 11520)\cos(t) \\
					+ (5760 + 960 t^2 )\cos^2(t) - 1920 t^2 + 5760\big)
				\end{matrix}$\\\hline
				3&$\begin{matrix}
					\big((25200+ 1080t^2+ 30t^4+ t^6)\cos(t) + 11520t^2 - 540t^4+ 4t^6 - 25200\big)\big/ \\
					\big((36288000+ 2419200t^2+ 151200t^4)\cos^2(t) \\
					+(13305600t^2- 72576000- 604800t^4)\cos(t)- 15724800t^2+ 453600t^4 + 36288000\big)
				\end{matrix}$\\\hline
				4&$\begin{matrix}
					\big((5080320+ 161280t^2+ 3024t^4+ 48t^6+t^8)\cos(t) + 2378880t^2- 134064t^4+ 1800t^6- 5t^8 \\
					- 5080320\big)\big/ \big((1024192512000+ 43893964800t^2+ 1219276800t^4+ 40642560t^6)\cos^2(t)\\
					+(424308326400t^2- 2048385024000- 23166259200t^4+ 121927680t^6)\cos(t)  \\ - 468202291200t^2 
					+ 21946982400t^4- 162570240t^6 + 1024192512000\big)
				\end{matrix}$
				\\\hline
			\end{tabular}
		\end{center}
	}
\end{table}

\begin{table}[tbhp]
	{\footnotesize
		\caption{Penalty parameters for numerical experiments. }\label{opt}
		\begin{center}
			\begin{tabular}{|  c | c | c | }\hline
				$p$	&$\gamma=\gamma_0$&$\gamma=\gamma^{opt}\:(kh/p=1)$
				\\\hline
				1&$-8.333333333333333 \times 10^{-2}$&$-8.592096810583184\times 10^{-2}$\\	
				2&$ -1.388888888888889\times 10^{-3}$&$-1.758364973238755\times 10^{-3}$\\	
				3&$-9.920634920634921\times 10^{-6}$&$-1.896623966419027 \times 10^{-5}$\\	
				4&$ -3.936759889140842\times 10^{-8}$&$ -1.793840107031879 \times 10^{-7}$\\	
				5&$ -9.941312851365762\times 10^{-11}$&$ -1.642663180893377 \times 10^{-9}$\\	
				6&$ -1.737991757231777\times 10^{-13}$&$ -7.477550634563100 \times 10^{-11}$\\	
				7&$ -2.228194560553560\times 10^{-16}$&$ -2.132344906487912 \times 10^{-14}$\\  \hline
			\end{tabular}
		\end{center}
	}
\end{table}

\begin{table}[tbhp]
	{\footnotesize
		\caption{Coefficients of the leading terms for $|k-k_h^{\gamma_0}|$. }\label{cp}
		\begin{center}
			\begin{tabular}{  c  c  c  c  c c }\hline\hline
				$p$	&$c_p$&$p$&$c_p$&$\quad p$&$c_p$
				\\\hline
				$1$&$1/720$&$4$&$223/140826470400$&$7$&$1097/119020127189483520000$\\
				$2$&$1/22400$&$5$&$421/103567809945600$&$8$&$1607/177431543456322355200000$\\
				$3$&$97/254016000$&$6$&$101/14104949354496000$&$\cdots$&$\cdots$
				\\  \hline\hline
			\end{tabular}
		\end{center}
	}
\end{table}
\section{Extension to multi-dimensions}
In this section we will show that the Theorem~\ref{main} still holds in higher dimensions ($d=2,3$). We will sketch the proof for two dimensions and then explain how to generalize to three dimensions.

First, we recall the following definition and properties of Kronecker matrix product which are esssential to our investigation.
\begin{definition}[Kronecker Product \cite{graham2018kronecker,Kronecker}]
	If $Y=\left(y_{ij}\right)_{m\times n}$, $Z=\left(z_{ij}\right)_{q\times r}$, then the Kronecker product $Y\otimes Z$ is an $mq\times nr$ block matrix in the form of
	\begin{align*}
		Y\otimes Z=\left(\begin{array}{ccc}
			y_{11}Z&\cdots&y_{1n}Z\\
			\vdots&\ddots&\vdots\\
			y_{n1}Z&\cdots&y_{nn}Z
		\end{array}\right)
	\end{align*}
	{\rm Properties} 
	{\rm (a)} The Kronecker product is bilinear and associative: $X\otimes(Y+Z)=X\otimes Y+X\otimes Z$, $(X+Y)\otimes Z=X\otimes Z+Y\otimes Z$, $X\otimes(Y\otimes Z)=(X\otimes Y)\otimes Z$.
	
	{\rm (b)} If $Y_1$,$Y_2$, $Z_1$ and $Z_2$ are matrices of such size that can form the matrix products $Y_1Y_2$ and $Z_1Z_2$, we then have mixed-product property: $(Y_1\otimes Z_1)(Y_2\otimes Z_2)=Y_1Y_2\otimes Z_1Z_2$.
	
	{\rm (c)} Conjugate transposition is distributive over the Kronecker product: $(Y\otimes Z)^\mathbf{H}=Y^\mathbf{H}\otimes Z^\mathbf{H}$.
\end{definition}

We are now in the position to consider the two-dimensional case. Let $t_1=t\cos\theta,t_2=t\sin\theta,t_{h1}=t_h\cos\theta,t_{h2}=t_h\sin\theta$ where $t=kh,t_h=k_h h$ as before. The following lemma gives an explicit expression of   the coefficient matrix $\mathcal{D}$ in \eqref{equarray}, whose proof is not difficult but too long to give here and we leave it to Appendix~\ref{A4}.

\begin{lemma}\label{2deq}
	When solving the two-dimensional Helmholtz equation with $p^{\rm th}$ order CIP-FEM on the tensor product mesh, the coefficient matrix $\mathcal{D}$ associated to the set of generating nodes $\mathcal{X}_g=\{\bm{x}_{0,0},\bm{x}_{0,1},\cdots,\bm{x}_{p-1,p-1}\}$  (See Figure~\ref{2dgs}) takes the following form:
	\begin{align*}
		\mathcal{D}=\mathcal{D}^{t_1,t_{h1}}\otimes\mathcal{M}^{t_{h2}}+\mathcal{M}^{t_{h1}}\otimes\mathcal{D}^{t_2,t_{h2}}
	\end{align*}
	where $\mathcal{D}^{t_i,t_{hi}}$ is the coefficient matrix in Lemma~\ref{1deq} by replacing $t,t_h$ with $t_i,t_{hi}$, respectively, and ${\mathcal M}^\beta$ is defined by
	\begin{align*}
		\begin{array}{ll}
			\mathcal{M}_{1,1}^\beta&=2\int_0^1\lambda_0^2\D x+2\cos\beta\int_0^1\lambda_p\lambda_0\D x,\vspace{3mm}\\
			\mathcal{M}_{1,j+1}^\beta&=\int_0^1\lambda_j\lambda_0\D x+e^{-\ii \beta}\int_0^1\lambda_{p-j}\lambda_0\D x,\quad
			\mathcal{M}_{i+1,1}^\beta=\mathbf{conj}(\mathcal{M}_{1,i+1}^\beta),\vspace{3mm}\\
			\mathcal{M}_{i+1,j+1}^\beta&=\int_0^1\lambda_j\lambda_i\D x,\qquad 1\leq i,j\leq p-1.
		\end{array}
	\end{align*}
\end{lemma}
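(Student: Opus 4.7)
The plan is to exploit the tensor product structure of the Cartesian mesh and the basis to reduce the 2D coefficient matrix to a combination of 1D matrices already computed in Lemma~\ref{1deq} together with certain 1D mass-type matrices. On a tensor product mesh every global basis function factors as $\phi_{\bm{x}_{i,j}}(x,y)=\phi_i(x)\phi_j(y)$ and every $(d-1)$-face is either vertical or horizontal; accordingly, I would first split the scalar $k^2=k_1^2+k_2^2$ with $t_i=k_ih$, and decompose
\begin{align*}
\Aga(u,v)=A_x(u,v)+A_y(u,v),
\end{align*}
where $A_x(u,v):=(\partial_x u,\partial_x v)-k_1^2(u,v)+J_x(u,v)$ collects the $x$-derivatives and the penalty on vertical faces, and $A_y$ is defined symmetrically. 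This decomposition is precisely tailored to decouple the two Kronecker factors.

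Next, inserting tensor-product basis functions into each piece, both ingredients of $A_x$ factor cleanly on a generic 2D element: the bulk part contributes $B_{t_1}(\lambda_i,\lambda_k)/h$ in $x$ times $h\int_0^1\lambda_j\lambda_l\D y$ in $y$ (after the same rescaling as in the proof of Lemma~\ref{1deq}), while the vertical-edge penalty collapses to a 1D $p^{\rm th}$-derivative-jump penalty in $x$ times a 1D mass integral in $y$, since $\phi_i^{(p)}$ is constant on each 1D element and therefore the jump factors out of the edge integral. I would then assemble the equation at a generating node $\bm{x}_{i,j}$ by summing over all nodes in its stencil and invoking the 2D Bloch condition \eqref{bloch_h}, which contributes independent phase factors $e^{\ii m t_{h1}}$ in $x$ and $e^{\ii n t_{h2}}$ in $y$. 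The $x$-shifts are exactly those arising in the proof of Lemma~\ref{1deq} and reproduce the entries of $\mathcal{D}^{t_1,t_{h1}}$, while the $y$-shifts, involving only mass-type inner products between nearest neighbors, reproduce the entries of $\mathcal{M}^{t_{h2}}$ (using the symmetry identity $\int\lambda_j\lambda_p=\int\lambda_{p-j}\lambda_0$ to put the nearest-neighbor piece into the stated form). Ordering $\mathcal{X}_g$ lexicographically so that the first index varies slowest identifies this contribution with $\mathcal{D}^{t_1,t_{h1}}\otimes\mathcal{M}^{t_{h2}}$; a symmetric treatment of $A_y$ yields $\mathcal{M}^{t_{h1}}\otimes\mathcal{D}^{t_2,t_{h2}}$, and adding completes the identification.

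The main obstacle is bookkeeping rather than any new analytical ingredient: one must verify that every shifted-stencil contribution pairs with the correct Bloch phase and lands in the correct block of the Kronecker product, and, in particular, that the vertical-edge penalty reproduces the \emph{same} transverse mass matrix $\mathcal{M}^{t_{h2}}$ as the bulk term with no spurious factors appearing from the jump convention $[\,\cdot\,]$. The cleanest way to handle this is to verify the identity entry by entry on the four block types (distinguished by whether $i$ and/or $j$ equal $0$), recycling the $(1,1)$, $(1,j+1)$, $(i+1,1)$, $(i+1,j+1)$ case analysis already performed for Lemma~\ref{1deq} so that the Bloch phase algebra need not be redone from scratch. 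Once the 2D case is established, extension to 3D is routine by iterating the split $\Aga=A_x+A_y+A_z$, producing the Kronecker-sum expression $\mathcal{D}^{t_1,t_{h1}}\otimes\mathcal{M}^{t_{h2}}\otimes\mathcal{M}^{t_{h3}}+\mathcal{M}^{t_{h1}}\otimes\mathcal{D}^{t_2,t_{h2}}\otimes\mathcal{M}^{t_{h3}}+\mathcal{M}^{t_{h1}}\otimes\mathcal{M}^{t_{h2}}\otimes\mathcal{D}^{t_3,t_{h3}}$.
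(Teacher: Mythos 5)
Your proposal is correct and follows essentially the same route as the paper's Appendix proof: exploit the tensor-product factorization $\phi_{\bm{x}_{i,j}}=\phi_i(x)\phi_j(y)$, observe that both the volume terms and the face penalties (whose integrand factors because $\phi_i^{(p)}$ is elementwise constant) split direction-by-direction, apply the 2D Bloch condition with independent phases $e^{\ii t_{h1}}$, $e^{\ii t_{h2}}$, and identify the assembled equations at the generating nodes entry-by-entry with the Kronecker sum $\mathcal{D}^{t_1,t_{h1}}\otimes\mathcal{M}^{t_{h2}}+\mathcal{M}^{t_{h1}}\otimes\mathcal{D}^{t_2,t_{h2}}$. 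The only cosmetic difference is that you split $k^2=k_1^2+k_2^2$ at the outset, whereas the paper keeps $B_t$ in both directions plus a cross term $t^2\int\lambda_{i_1}\lambda_{i_2}\int\lambda_{j_1}\lambda_{j_2}$ and regroups afterwards; since $B_{t_1}(u,v)=B_t(u,v)+t_2^2\int uv$, the two bookkeepings are algebraically identical.
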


By analogy with the one-dimensional case, in order to calculate the determinant of matrix $\mathcal{D}$, we aim to transform it to a form which is more calculable. We need the following lemmas to proceed with our research whose rigorous proofs are postponed to Appendices~\ref{A5} and \ref{A6}.
\begin{lemma}\label{Mij}
	If $\beta=o(1)$ as $t\to 0$, 	$\tilde{\mathcal{M}}^\beta=Q^\beta\mathcal{M}^{\beta}(Q^\beta)^{\bm{H}}$ satisfies the following estimates:
	\begin{align*}
		\tilde{\mathcal{M}}_{1,1}^\beta&=1+o(1) \quad\text{and}\quad\tilde{\mathcal{M}}_{1,j+1}^\beta, \tilde{\mathcal{M}}_{j+1,1}^\beta=\int_0^1 \lambda_j\D x+o(1)\quad\forall 1\leq j\leq p-1,
	\end{align*}
	where $Q^\beta$ is defined in \eqref{Q}.
\end{lemma}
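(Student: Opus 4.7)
The plan is to expand the congruence $\tilde{\mathcal{M}}^\beta = Q^\beta\mathcal{M}^\beta (Q^\beta)^{\mathbf{H}}$ by exploiting the block form of $Q^\beta$ and then take the limit $t\to 0$ (which also drives $\beta\to 0$). Writing $q_i:=c_i + e^{-\ii\beta}d_i$ and using that $Q^\beta$ coincides with the identity outside its first row $(1,q_1,\dots,q_{p-1})$, a direct block multiplication gives
\begin{align*}
	\tilde{\mathcal{M}}^\beta_{1,j+1} &= \mathcal{M}^\beta_{1,j+1}+\sum_{i=1}^{p-1}q_i\mathcal{M}^\beta_{i+1,j+1},\\
	\tilde{\mathcal{M}}^\beta_{1,1} &= \mathcal{M}^\beta_{1,1}+\sum_{i=1}^{p-1}q_i\mathcal{M}^\beta_{i+1,1}+\sum_{j=1}^{p-1}\bar q_j \mathcal{M}^\beta_{1,j+1}+\sum_{i,j=1}^{p-1}q_i\bar q_j\mathcal{M}^\beta_{i+1,j+1},
\end{align*}
while $\tilde{\mathcal{M}}^\beta_{j+1,1}=\overline{\tilde{\mathcal{M}}^\beta_{1,j+1}}$ follows from the Hermitian symmetry of $\mathcal{M}^\beta$.

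Next I would pin down the limits $c_i\to 1-i/p$, $d_i\to i/p$, and hence $q_i\to 1$, as $t,\beta\to 0$. One route is Lemma~\ref{pepo}: in both the numerator and denominator of $\pe(x)$, the leading $t^{-2(N_e+1)}$ contribution (the $j=N_e+1$ term) dominates, and its ratio simplifies to $1$, giving $\pe(x)\to 1$; an analogous computation yields $\po(x)\to 2x-1$. Lemma~\ref{cidi} then delivers $c_i + d_i = \pe(i/p)\to 1$ and $d_i - c_i = \po(i/p)\to 2i/p - 1$. A more elementary derivation uses \eqref{defcidi} directly: at $t=0$ the condition $\int_0^1\xi_0'\lambda_i'\D x = 0$ for $1\le i\le p-1$ together with $\xi_0(0)=1$, $\xi_0(1)=0$ is satisfied by the linear function $1-x$ (since each bubble $\lambda_i$ vanishes at the endpoints), so $c_i=\xi_0(i/p)\to 1-i/p$ and similarly $d_i\to i/p$.

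Finally I would substitute these limits and collapse the sums using (i) the partition of unity $\sum_{i=0}^p\lambda_i\equiv 1$ and (ii) the reflection symmetry $\lambda_{p-j}(x)=\lambda_j(1-x)$, which gives $\int_0^1\lambda_{p-j}\lambda_0\D x = \int_0^1\lambda_j\lambda_p\D x$. The off-diagonal case is immediate:
\begin{align*}
	\tilde{\mathcal{M}}^\beta_{1,j+1}\to \int_0^1\lambda_j\lambda_0\D x+\int_0^1\lambda_j\lambda_p\D x+\sum_{i=1}^{p-1}\int_0^1\lambda_i\lambda_j\D x = \int_0^1\lambda_j\Big(\sum_{i=0}^p\lambda_i\Big)\D x = \int_0^1\lambda_j\D x.
\end{align*}
A parallel but longer regrouping for $\tilde{\mathcal{M}}^\beta_{1,1}$ assembles the nine pieces into $\int_0^1\bigl(\sum_{i=0}^p\lambda_i\bigr)^2\D x = 1$; the only asymmetric residual, $\int_0^1\lambda_0^2\D x - \int_0^1\lambda_p^2\D x$, vanishes by the same $x\mapsto 1-x$ symmetry.

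The only real obstacle is the bookkeeping for the $\tilde{\mathcal{M}}^\beta_{1,1}$ identity; no delicate analytic estimates are required beyond the continuity of $c_i,d_i,e^{-\ii\beta}$ at the origin, which is already encoded in the closed forms of Lemma~\ref{pepo}.
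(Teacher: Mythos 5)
Your proposal is correct and follows essentially the same route as the paper's own proof in Appendix A.5: both extract the limiting behaviour of $c_i$ and $d_i$ from the asymptotics $\pe(x)=1+\oo(t^2)$, $\po(x)=2x-1+\oo(t^2)$ of Lemma~\ref{pepo}, so that $c_i+e^{\mp\ii\beta}d_i=1+o(1)$, and then collapse the congruence $Q^\beta\mathcal{M}^\beta(Q^\beta)^{\mathbf{H}}$ using the partition of unity $\sum_{i=0}^p\lambda_i\equiv 1$. The only differences are bookkeeping (you expand the $(1,1)$ entry as a full double sum and invoke the $x\mapsto 1-x$ symmetry, whereas the paper regroups through the already-computed first-row entries) plus your optional, equally valid elementary identification of the $t=0$ limit of $\xi_0$ with $1-x$.
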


\sss

\begin{lemma}\label{2dpositive}
	Let $D_0$ be the matrix defined in Lemma~\ref{positive}. Denote by
	\begin{align*}
		\begin{array}{ll}
			D_1=
			\begin{pmatrix}
				0 &\bm{0}^T\\
				\bm{0}& D_0
			\end{pmatrix}
			,\quad
			M_1=\begin{pmatrix}
				1 & \int_0^1 \lambda_1 \D x & \cdots & \int_0^1\lambda_{p-1} \D x \\
				\int_0^1 \lambda_1 \D x & \int_0^1 \lambda_1 \lambda_1 \D x & \cdots &\int_0^1 \lambda_1 \lambda_{p-1} \D x\\
				\vdots & \vdots & \ddots & \vdots\\
				\int_0^1 \lambda_{p-1} \D x & \int_0^1 \lambda_{p-1} \lambda_1 \D x & \cdots &\int_0^1 \lambda_{p-1} \lambda_{p-1} \D x
			\end{pmatrix},
		\end{array}
	\end{align*}
	$$\hat{D}=D_1\otimes M_1+M_1\otimes D_1.$$		
	then we have $\hat{D}_{1,1}^*\neq 0$, where ${}^*$ denotes for the algebraic cofactor.
\end{lemma}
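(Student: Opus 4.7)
The plan is to compute $\hat{D}_{1,1}^*$ as the determinant of a conveniently reordered submatrix and show it is strictly positive via a Schur complement argument together with the positive definiteness of an interior ``variance'' matrix.

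First I would exploit the block structure of $D_1$. Since both the first row and first column of $D_1$ vanish, writing out $\hat{D}_{(i_1,j_1),(i_2,j_2)} = (D_1)_{i_1,i_2}(M_1)_{j_1,j_2} + (M_1)_{i_1,i_2}(D_1)_{j_1,j_2}$ shows that the first row and column of $\hat{D}$ itself vanish (explaining why the cofactor is the natural object, rather than the full determinant). Let $S$ be the minor obtained by deleting the $(1,1)$-indexed row and column, and reorder the remaining indices into three blocks $B_1 = \{(1,j)\}_{j=2}^p$, $B_2 = \{(i,1)\}_{i=2}^p$, $B_3 = \{(i,j)\}_{2\le i,j \le p}$ via a symmetric row/column permutation (so $\det S$ is unchanged). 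A direct case-by-case evaluation yields the block form
\begin{align*}
S = \begin{pmatrix} D_0 & 0 & A \\ 0 & D_0 & B \\ A^T & B^T & D_0\otimes M_0 + M_0\otimes D_0 \end{pmatrix},
\end{align*}
where $M_0$ is the interior mass matrix, $\bm{m} = (\int_0^1\lambda_1\D x,\ldots,\int_0^1\lambda_{p-1}\D x)^T$, $A_{a,(c,d)} = m_c(D_0)_{a,d}$, and $B_{b,(c,d)} = (D_0)_{b,c} m_d$.

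Next I would invoke the Schur complement with respect to the invertible block $\operatorname{diag}(D_0,D_0)$ (invertibility coming from Lemma~\ref{positive}). Short symmetry-based calculations, using $D_0^T = D_0$, give $A^T D_0^{-1} A = (\bm{m}\bm{m}^T)\otimes D_0$ and $B^T D_0^{-1} B = D_0 \otimes(\bm{m}\bm{m}^T)$, so the Schur complement collapses cleanly to
\begin{align*}
D_0 \otimes \tilde M + \tilde M \otimes D_0, \qquad \tilde M := M_0 - \bm{m}\bm{m}^T,
\end{align*}
yielding $\hat{D}_{1,1}^* = \det(D_0)^2 \det(D_0\otimes \tilde M + \tilde M \otimes D_0)$.

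Finally I would verify that $\tilde M$ is positive definite: for any nonzero $\bm{v}$, writing $v = \sum v_i \lambda_i$ gives $\bm{v}^T \tilde M \bm{v} = \int_0^1 v^2 \D x - (\int_0^1 v\D x)^2$, a variance under the uniform measure on $[0,1]$ that vanishes only when $v$ is constant almost everywhere; since each interior $\lambda_i$ vanishes at $0$, this forces $v\equiv 0$. Combined with $D_0$ being positive definite (Lemma~\ref{positive}) and the fact that the eigenvalues of a Kronecker product are the pairwise products of the factors' eigenvalues, both $D_0\otimes\tilde M$ and $\tilde M\otimes D_0$ are positive definite, so their sum is positive definite and the cofactor is strictly positive. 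The main obstacle I anticipate is the block-ordering and Kronecker-indexing bookkeeping when identifying $A^T D_0^{-1} A$ and $B^T D_0^{-1} B$ correctly; once those are in place, the positivity of $\tilde M$ is a one-line Cauchy--Schwarz observation and everything else is immediate.
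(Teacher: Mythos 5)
Your proposal is correct; I checked the block identification of the $(1,1)$ minor ($B_1$--$B_1$ and $B_2$--$B_2$ blocks equal $D_0$, the $B_1$--$B_2$ block vanishes, $A=\bm{m}^T\otimes D_0$, $B=D_0\otimes \bm{m}^T$), the Schur-complement collapse to $D_0\otimes\tilde M+\tilde M\otimes D_0$ with $\tilde M=M_0-\bm{m}\bm{m}^T$, and the variance argument for $\tilde M>0$ (Cauchy--Schwarz equality forces $v$ constant, and $v(0)=0$ then forces $v\equiv 0$), and each step goes through. However, your route is genuinely different from the paper's. The paper never touches the minor directly: it first shows the full matrix $M_1$ is SPD, then applies the congruence $Y\otimes Y$ with $Y=\operatorname{diag}(1,X)$, $XD_0X^T=\mathbf{I}_{p-1}$, followed by $W\otimes W$ diagonalizing $YM_1Y^T$, so that $\hat D$ is congruent to an explicit diagonal matrix with exactly one zero entry and $p^2-1$ positive ones; since the first row and column of $\hat D$ vanish, the kernel is spanned by the first coordinate vector and the $(1,1)$ submatrix must be nonsingular. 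Your approach instead isolates the cofactor from the start, and the positivity of $\tilde M$ you prove is exactly the Schur complement of the $(1,1)$ entry of $M_1$, so it is equivalent to the paper's statement that $M_1$ is SPD (your Cauchy--Schwarz proof of it is arguably cleaner than the paper's coordinate inequality). What the paper's congruence argument buys is a global statement about the inertia of $\hat D$ (rank $p^2-1$, PSD) with almost no index bookkeeping, and it transfers verbatim to the 3D matrix with three Kronecker summands; what your Schur-complement computation buys is an explicit signed formula, $\hat D_{1,1}^*=\det(D_0)^2\det\bigl(D_0\otimes\tilde M+\tilde M\otimes D_0\bigr)>0$, i.e.\ strict positivity rather than mere nonvanishing, at the price of the block/Kronecker indexing you already flagged as the delicate point.
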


We remark here that Lemma~\ref{2dpositive} will play the role of Lemma~\ref{positive}. 

\begin{theorem}\label{2dmain}
	When solving the two-dimensional Helmholtz equation with $p^{\rm th}$ order CIP-FEM on the tensor product mesh in $\mathbb{R}^2$, there exists a constant $C_0$ such that if $\frac{kh}{p}\le C_0$, we have the following estimate for the phase difference.
	\begin{align*}	
		|k-k_h|=\frac 12\bigg(\frac{1}{(2p+1)}\left[\frac{p!}{(2p)!}\right]^2+\gamma\bigg)k^{2p+1}h^{2p}+\oo(k^{2p+3}h^{2p+2}),
	\end{align*}
	
	As a consequence, taking the penalty parameter as $$\gamma_0=-\frac{1}{(2p+1)}\left[\frac{p!}{(2p)!}\right]^2$$
	can reduce the phase difference of CIP-FEM to $\oo(k^{2p+3}h^{2p+2})$.
\end{theorem}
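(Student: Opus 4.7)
The plan is to replicate the dispersion analysis of Theorem~\ref{main}, using the Kronecker-product structure of $\mathcal{D}$ given in Lemma~\ref{2deq} to reduce every matrix computation to the 1D ingredients already supplied by Lemmas~\ref{tildeD} and~\ref{Mij}. The first move is the congruence transformation
$$\tilde{\mathcal{D}}:=\bigl(Q^{t_{h1}}\otimes Q^{t_{h2}}\bigr)\mathcal{D}\bigl(Q^{t_{h1}}\otimes Q^{t_{h2}}\bigr)^{\mathbf{H}}.$$
By bilinearity, the mixed-product property, and the $(\cdot)^{\mathbf{H}}$-distributivity of $\otimes$ (properties (a)--(c) of the Kronecker product),
$$\tilde{\mathcal{D}}=\tilde{\mathcal{D}}^{t_1,t_{h1}}\otimes\tilde{\mathcal{M}}^{t_{h2}}+\tilde{\mathcal{M}}^{t_{h1}}\otimes\tilde{\mathcal{D}}^{t_2,t_{h2}},$$
and since each $Q^\beta$ is upper triangular with unit diagonal, $\mathbf{Det}(\mathcal{D})=\mathbf{Det}(\tilde{\mathcal{D}})$.

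Next, at $t_h=t$ (so $t_{hi}=t_i$), I would combine Lemma~\ref{tildeD} for the 1D stiffness blocks and Lemma~\ref{Mij} for the mass-like blocks to tabulate the order of every entry of $\tilde{\mathcal{D}}$, viewed as a scalar $p^2\times p^2$ matrix. The decisive identity is
$$\tilde{\mathcal{D}}_{1,1}(t,t,\gamma)=\tilde{\mathcal{D}}^{t_1,t_1}_{1,1}\,\tilde{\mathcal{M}}^{t_2}_{1,1}+\tilde{\mathcal{M}}^{t_1}_{1,1}\,\tilde{\mathcal{D}}^{t_2,t_2}_{1,1}=\Big(\tfrac{1}{2p+1}\big[\tfrac{p!}{(2p)!}\big]^2+\gamma\Big)\bigl(t_1^{2p+2}+t_2^{2p+2}\bigr)+\oo(t^{2p+4}),$$
while every other first-row or first-column entry of $\tilde{\mathcal{D}}$ is a sum of two terms, each carrying an $\oo(t^{p+2})$ factor from Lemma~\ref{tildeD} against an $\oo(1)$ factor from Lemma~\ref{Mij}, hence is $\oo(t^{p+2})$. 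The full $p^2\times p^2$ matrix $\tilde{\mathcal{D}}$ converges, as $t\to 0$, to exactly the matrix $\hat{D}=D_1\otimes M_1+M_1\otimes D_1$ of Lemma~\ref{2dpositive}. A chain-rule computation combined with the derivative estimates in Lemma~\ref{tildeD} similarly yields $\partial_{t_h}|_{t_h=t}\tilde{\mathcal{D}}_{1,1}=2t+\oo(t^{2p+1})$ while all other first-row/column partials are $\oo(t^{p+1})$.

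A Laplace expansion of $\mathbf{Det}(\tilde{\mathcal{D}})$ along the first row and then the first column, mimicking Step~2 of the proof of Lemma~\ref{tildeD}, leaves $\tilde{\mathcal{D}}_{1,1}\cdot(\tilde{\mathcal{D}}_{1,1})^*$ as the dominant contribution; its cofactor tends to $\hat{D}^*_{1,1}\neq 0$ by Lemma~\ref{2dpositive}, and every cross term is a product of two $\oo(t^{p+2})$ factors against an $\oo(1)$ sub-cofactor, hence $\oo(t^{2p+4})$. Rescaling $\tilde{t}=t/p$ exactly as in Theorem~\ref{main} then yields
$$\tilde{\mathcal{F}}(\tilde{t},\tilde{t},\gamma)=\hat{D}^*_{1,1}\Big(\tfrac{1}{2p+1}\big[\tfrac{p!}{(2p)!}\big]^2+\gamma\Big)p^{2p+2}\bigl(\cos^{2p+2}\theta+\sin^{2p+2}\theta\bigr)\tilde{t}^{2p+2}+\oo(\tilde{t}^{2p+4})$$
and $\tilde{\mathcal{F}}_1'(\tilde{t},\tilde{t},\gamma)=2\hat{D}^*_{1,1}p^2\tilde{t}+o(\tilde{t})$. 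Applying Lemma~\ref{dispersion} with $\sigma_1=2p+2$ and $\sigma_2=1$ produces, for each direction $\theta$, a discrete wavenumber $k_h(\theta)$ with
$$|k-k_h(\theta)|=\tfrac{1}{2}\Big(\tfrac{1}{2p+1}\big[\tfrac{p!}{(2p)!}\big]^2+\gamma\Big)k^{2p+1}h^{2p}\bigl(\cos^{2p+2}\theta+\sin^{2p+2}\theta\bigr)+\oo(k^{2p+3}h^{2p+2}),$$
and the theorem follows by taking the supremum over $\theta$, since $\cos^{2p+2}\theta+\sin^{2p+2}\theta$ attains its maximum value $1$ precisely on the coordinate axes.

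The main obstacle is the Laplace-expansion bookkeeping: I must rule out any sub-cofactor cancellation that could promote a cross term to compete with the $(1,1)$ term, and the same kind of order count has to be carried through for the $t_h$-derivative. The non-degeneracy of $\hat{D}^*_{1,1}$ is precisely what Lemma~\ref{2dpositive} provides for the dominant term, and for the cross terms I will rely on the fact that although $\tilde{\mathcal{M}}^{t_i}$ carries nonzero first-row/column entries (so $\tilde{\mathcal{D}}$ is \emph{not} block-triangular), each such entry is paired in the Kronecker expression with a factor from $\tilde{\mathcal{D}}^{t_i,t_i}$ of order at least $\oo(t^{p+2})$, while the residual $(p^2-2)\times(p^2-2)$ minor is manifestly $\oo(1)$. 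The 3D extension mentioned in the introduction proceeds identically, using the triple Kronecker product $Q^{t_{h1}}\otimes Q^{t_{h2}}\otimes Q^{t_{h3}}$ and the three-term decomposition of $\mathcal{D}$ analogous to Lemma~\ref{2deq}, with $\cos^{2p+2}\theta+\sin^{2p+2}\theta$ replaced by the sum of $2p+2$ powers of the three spherical components, whose supremum over the unit sphere is again $1$.
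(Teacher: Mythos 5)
Your proposal is correct and follows essentially the same route as the paper: the congruence transform by $Q^{t_{h1}}\otimes Q^{t_{h2}}$, the decomposition $\tilde{\mathcal{D}}=\tilde{\mathcal{D}}^{t_1,t_{h1}}\otimes\tilde{\mathcal{M}}^{t_{h2}}+\tilde{\mathcal{M}}^{t_{h1}}\otimes\tilde{\mathcal{D}}^{t_2,t_{h2}}$, the order-counting via Lemmas~\ref{tildeD} and \ref{Mij} with the nondegenerate cofactor $\hat{D}^*_{1,1}$ from Lemma~\ref{2dpositive}, the application of Lemma~\ref{dispersion}, and the final bound $\cos^{2p+2}\theta+\sin^{2p+2}\theta\le 1$. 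Your Laplace-expansion bookkeeping simply spells out what the paper compresses into ``performing in a similar manner as that in Theorem~\ref{main}.''
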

\begin{proof}
	Taking $Q=Q^{t_{h1}}\otimes Q^{t_{h2}}$, from Lemma~\ref{2deq}, we have
	\begin{align*}
		\tilde{D}&=Q\mathcal{D}Q^{\bm{H}}\\
		&=\Big(Q^{t_{h1}}\mathcal{D}^{t_1,t_{h1}}(Q^{t_{h1}})^{\bm{H}}\Big)\otimes \Big(Q^{t_{h2}}\mathcal{M}^{t_{h2}}(Q^{t_{h2}})^{\bm{H}}\Big)\\
		&+\Big(Q^{t_{h1}}\mathcal{M}^{t_{h1}}(Q^{t_{h1}})^{\bm{H}}\Big)\otimes \Big(Q^{t_{h2}}\mathcal{D}^{t_2,t_{h2}}(Q^{t_{h2}})^{\bm{H}}\Big)\\
		&=\tilde{\mathcal{D}}^{t_1,t_{h1}}\otimes \tilde{\mathcal{M}}^{t_{h2}}+ \tilde{\mathcal{M}}^{t_{h1}}\otimes \tilde{\mathcal{D}}^{t_2,t_{h2}}.
	\end{align*}
	
	Then it follows the same procedure as in the proof of Theorem~\ref{main}. Let $\tilde t=\frac tp$, $\tilde t_h=\frac {t_h}{p}$, and define
	$$\tilde{\mathcal F}(\tilde t_h,\tilde t,\gamma,\theta):=\mathcal F(t_h,t,\gamma,\theta):=\mathbf{Det}(\mathcal D)=\mathbf{Det}(\tilde{\mathcal D}).$$
	
	We only need to evaluate the leading term of $\tilde{\mathcal F}(\tilde t,\tilde t,\gamma,\theta)$ and $\tilde{\mathcal F}_{1}'(\tilde t,\tilde t,\gamma,\theta)$.
	Observing the structure of matrix $\tilde{D}$ and performing in a similar manner as that in Theorem~\ref{main}, we conclude that 
	\begin{align*}
		&\mathcal F(\tilde t,\tilde t,\gamma,\theta)\\
		=&\hat{D}_{1,1}^*\bigg(\frac{1}{(2p+1)}\left[\frac{p!}{(2p)!}\right]^2+\gamma\bigg)(t_1^{2p+2}+t_2^{2p+2})+\oo(t^{2p+4})\\
		=&\hat{D}_{1,1}^*\bigg(\frac{1}{(2p+1)}\left[\frac{p!}{(2p)!}\right]^2+\gamma\bigg)\left((\cos\theta)^{2p+2}+(\sin\theta)^{2p+2}\right)p^{2p+2}\tilde t^{2p+2}+\oo(\tilde t^{2p+4}),
	\end{align*}
	and
	$$\tilde{\mathcal F}_1'(\tilde t,\tilde t,\gamma,\theta)=2p\hat{D}_{1,1}^*(\cos^2\theta+\sin^2\theta)t+o(t)=2\hat{D}_{1,1}^*p^2\tilde t+o(\tilde t).$$
	
	Thus  by Lemma~\ref{dispersion},
	for $\tilde t$ sufficiently small, the phase difference in direction $\theta$ reads:
	$$\frac 12\bigg(\frac{1}{(2p+1)}\left[\frac{p!}{(2p)!}\right]^2+\gamma\bigg)\left((\cos\theta)^{2p+2}+(\sin\theta)^{2p+2}\right)k^{2p+1}h^{2p}+\oo(k^{2p+3}h^{2p+2}),$$
	notice that
	$$\left|(\cos\theta)^{2p+2}+(\sin\theta)^{2p+2}\right|\leq\left|(\cos\theta)^{2p+2}\right|+\left|(\sin\theta)^{2p+2}\right|\leq\cos^2\theta+\sin^2\theta=1,$$
	we finally obtain the conclusion as claimed.
\end{proof}
\begin{remark}
	The results in Theorem~\ref{2dmain} could be extended to the 3D case with
	$\mathcal{D}=\sum_{i=1}^{3}\Big(\big(\bigotimes_{j=1}^{i-1}\mathcal{M}^{t_{hj}}\big)\bigotimes\mathcal{D}^{t_i,t_{hi}}\bigotimes\big(\bigotimes_{j=i+1}^3\mathcal{M}^{t_{hj}}\big)\Big)$ and $Q=\bigotimes_{j=1}^{3}Q^{t_{hj}}$. We omit the details.
\end{remark}

\section{Numerical results}
In this section we will illustrate the pollution effect of the FEM, CIP-FEM with the penalty parameter $\gamma=\gamma_0=-\frac{1}{(2p+1)}\left[\frac{p!}{(2p)!}\right]^2$ we derived in Theorem~\ref{main} and \ref{2dmain} and the penalty parameter $\gamma=\gamma^{opt}$ as well. We also verify that the pollution term and  phase difference are of the same order.

According to the preasymptotic error analyses of FEM \cite{cip1,cip2,cip3,hp1,hp2}, the following error estimate holds for the finite element solution $u_h^{\rm FEM}$.
\eq{\label{errFEM}\|u-u_h^{\rm FEM}\|_{H^1}\lesssim k^ph^p+k^{2p+1}h^{2p}}
where the first term in the right hand side is the interpolation error and the second term is the pollution error which is of the same order as the phase difference. Since the phase difference of the CIP-FEM is of order $k^{2p+3}h^{2p+2}$ if $\gamma=\gamma_0$ (see Theorem~\ref{main} and \ref{2dmain}), we expect the following error estimate for the CIP-FEM with $\gamma=\gamma_0$
\eq{
	\label{errCIP}\|u-u_h\|_{H^1}\lesssim k^ph^p+k^{2p+3}h^{2p+2}}
which reduce the pollution error of the FEM  to $Ck^{2p+3}h^{2p+2}$.

\begin{example}\label{Ex1} We simulate the following one dimensional Helmholtz problem:
	\begin{equation*}
		\begin{cases}
			&-u^{\prime\prime}-k^2u=1,\quad \text{in } (0,1),\\
			&u(0)=0,\quad
			u^\prime(1)+\ii k u(1)=0,\\
		\end{cases}
	\end{equation*}
	whose exact solution reads:
	$u=\frac{1}{k^2}\big(e^{-\ii kx}+\ii e^{-\ii k}\sin(kx)\big).$
\end{example}

Figure~\ref{loglogh} presents log-log plots of relative $H^1$ errors and phase differences $|k-k_h|$ versus the reciprocal of mesh size $h$ for FEM and CIP-FEM with $p=1,2,3$, respectively. Note that the slope of the error curve is $-m$ means that the convergence order of the error in $h$ is $m$. For $k=10$, the convergence orders in $h$ of the FE and CIP-FE solutions are coincide with that of the best approximation (with convergence order $p$ in $h$), which indicates that no pollution effect occurs for small wave number $k$. As $k$ grows larger ($k=10^3$ and $10^4$), the convergence orders of both the pollution error and phase difference with respect to $h$ are $2p$  for the FEM and $2p+2$ for the CIP-FEM with $\gamma=\gamma_0$, respectively, while the CIP-FEM with $\gamma=\gamma^{opt}$ remains unpolluted. Notice that the pollution effect diminishes as $h$ becomes smaller and enters the  asymptotic regime.
\begin{figure}
	\centering
	\includegraphics[scale=0.65,trim={2cm 0 2cm 0}]{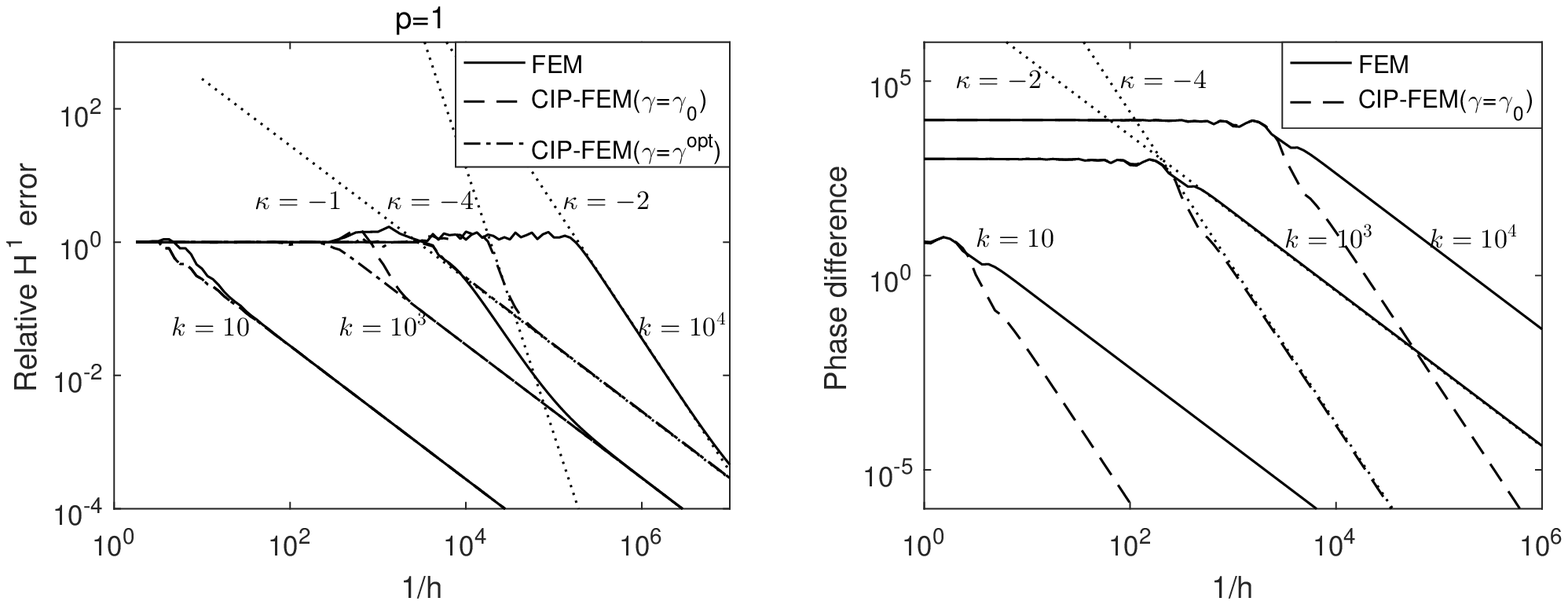}\\
	\includegraphics[scale=0.65,trim={2cm 0 2cm 0}]{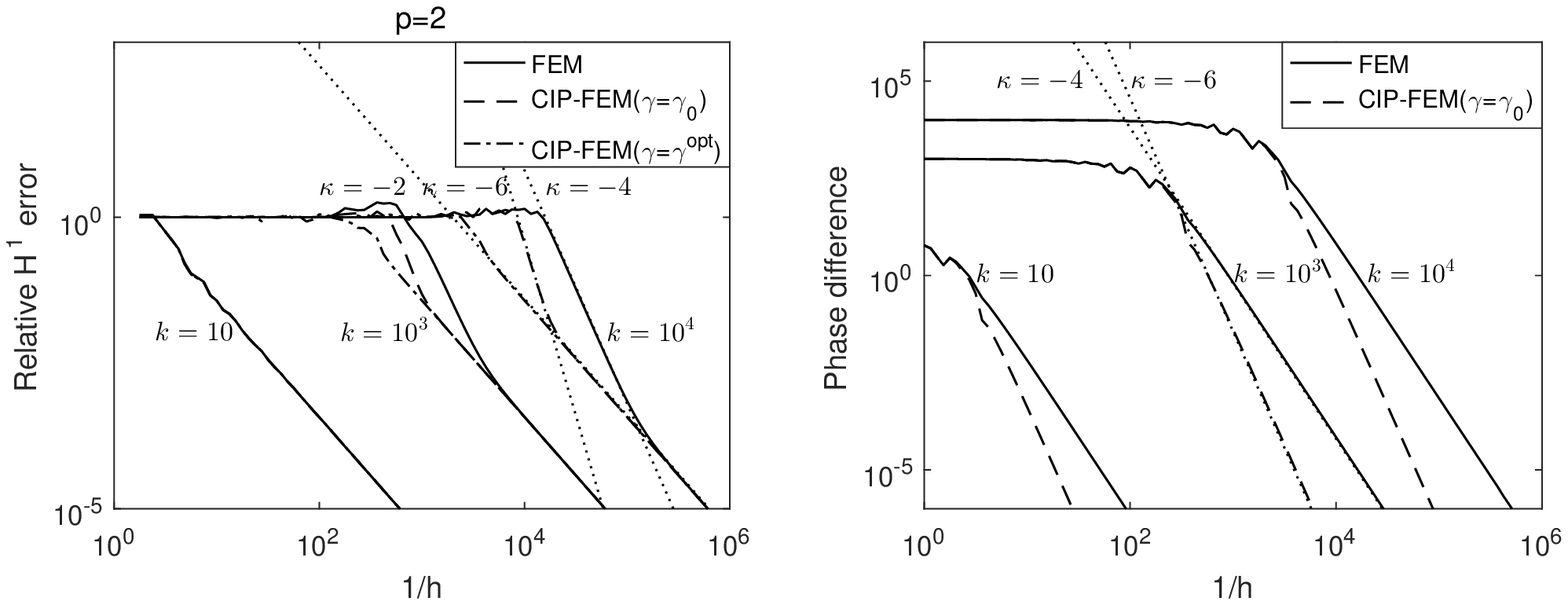}\\
	\includegraphics[scale=0.65,trim={2cm 0 2cm 0}]{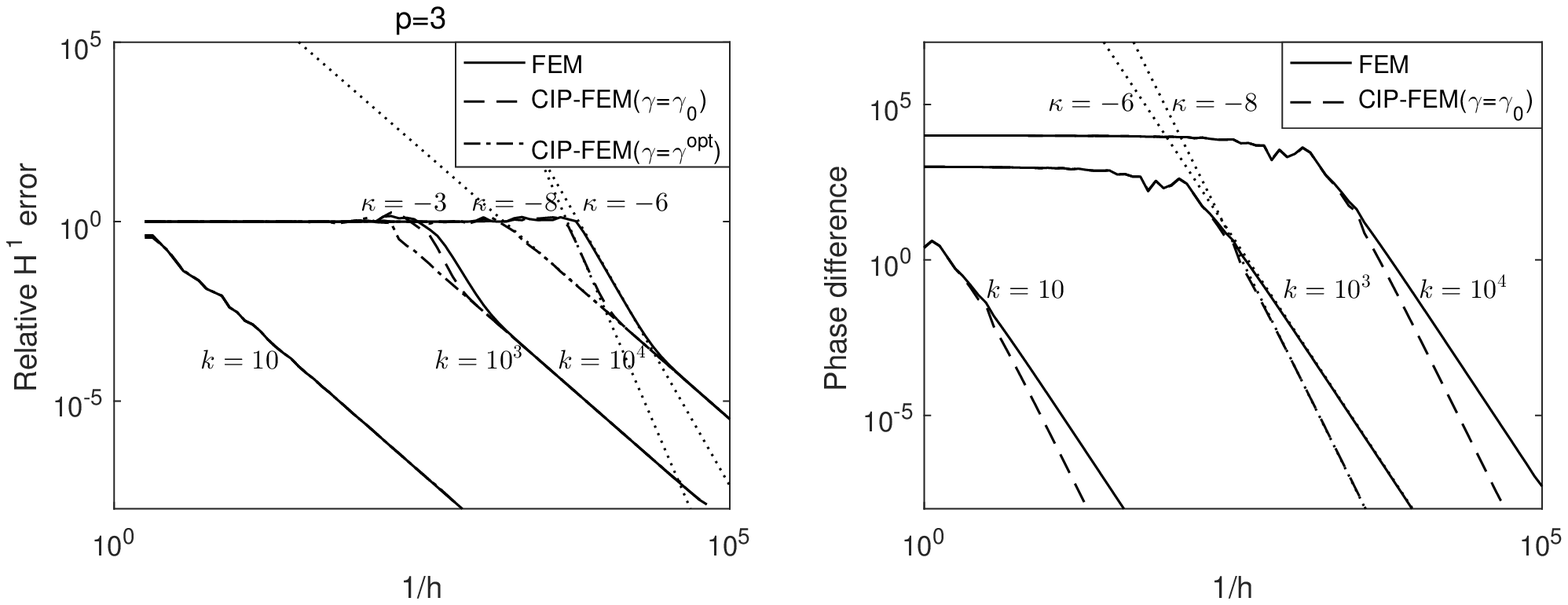}
	\caption{Example~\ref{Ex1}: Log-log plot of relative $H^1$ error (left) and phase difference (right) versus the reciprocal of the mesh size. The dotted lines give reference slopes denoted by $\kappa$.
		\label{loglogh}
	}
\end{figure}

Similar analysis could be applied on Figure~\ref{loglogk} that gives log-log plots of the errors versus the reciprocal of the wave number $k$, which especially verifies that the convergence orders of both the pollution error and phase difference with respect to $k$ are $2p+1$  for the FEM and $2p+3$ for the CIP-FEM with $\gamma=\gamma_0$, respectively, furthermore, taking $\gamma=\gamma^{opt}$ in CIP-FEM eliminates the pollution effect. These observations verify the error estimates in \eqref{errFEM} and \eqref{errCIP}.
\begin{figure}
	\centering
	\includegraphics[scale=0.65,trim={2cm 0 2cm 0}]{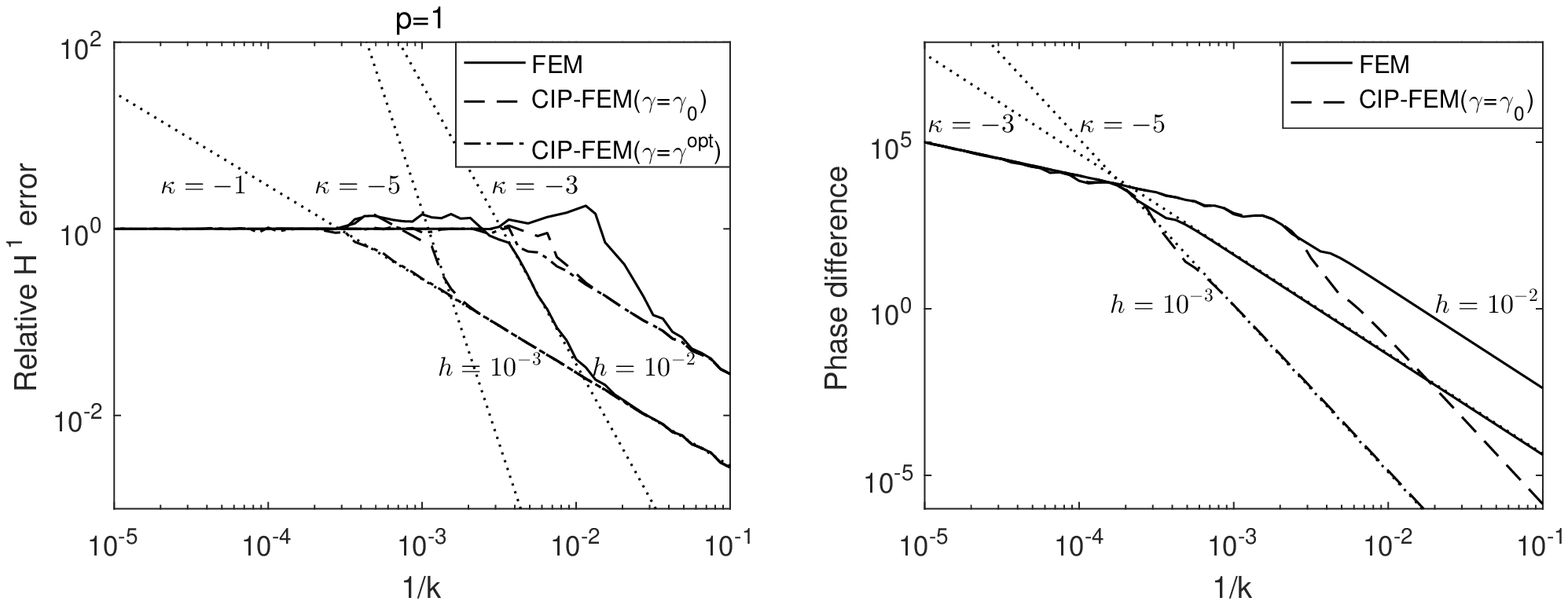}\\
	\includegraphics[scale=0.65,trim={2cm 0 2cm 0}]{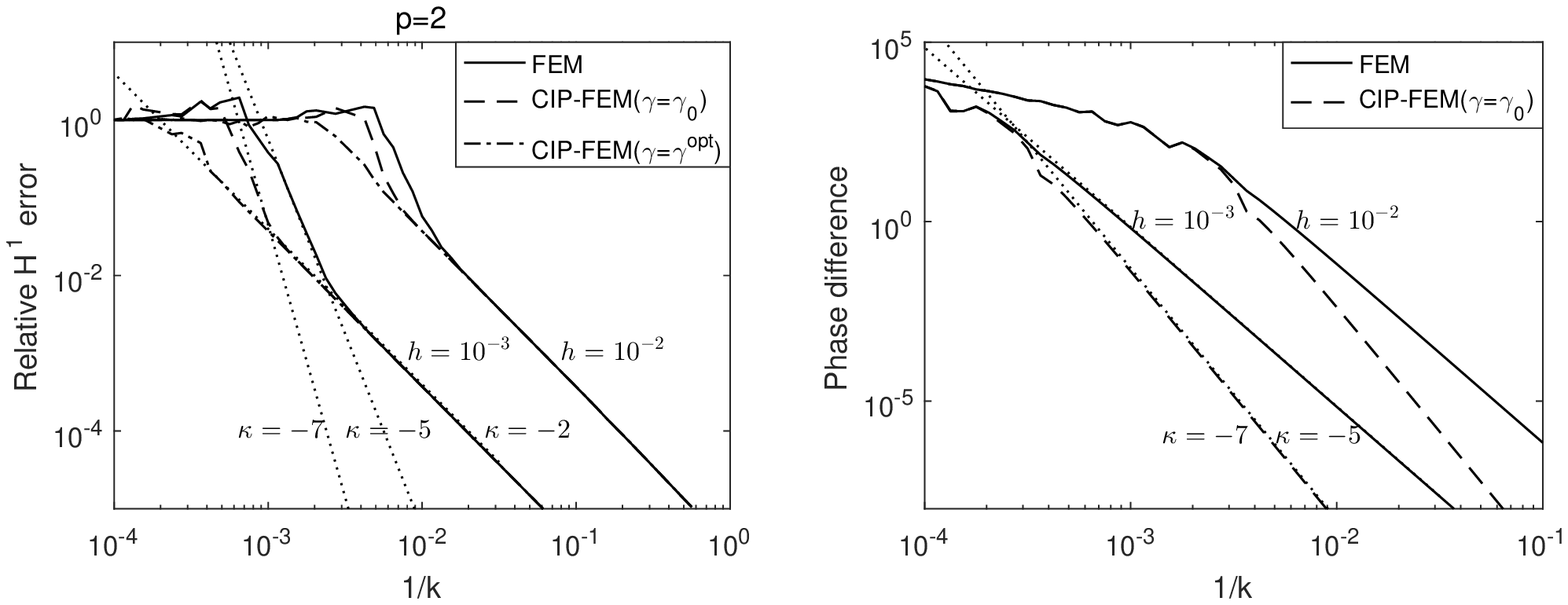}\\
	\includegraphics[scale=0.65,trim={2cm 0 2cm 0}]{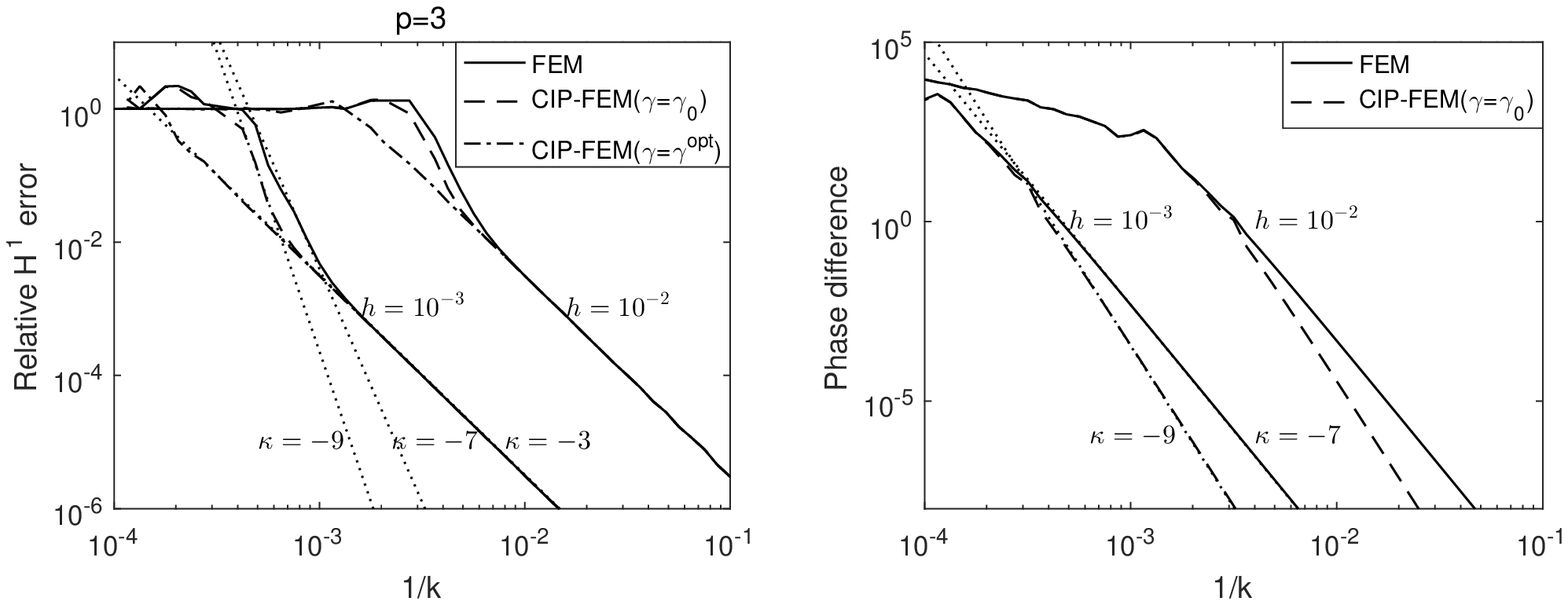}
	\caption{Example~\ref{Ex1}: Log-log plot of relative $H^1$ error (left) and phase difference (right) versus the reciprocal of the wave number. The dotted lines give reference slopes denoted by $\kappa$. 
		\label{loglogk}
	}
\end{figure}

\begin{example}\label{Ex2}
	We simulate the following two-dimensional Helmholtz equation:
	\begin{align*}
		\begin{cases}
			-\Delta u-k^2u=0,&in\;\;\Omega,\\
			\quad\frac{\partial u}{\partial\mathbf{n}}+\ii ku=g,&on\; \partial\Omega, 
		\end{cases}
	\end{align*}
	where $\Omega=(0,1)\times(0,1)$ and $g$ is so chosen depending on the exact solution $u=\sin\big(\frac{\sqrt{2}}{2}k(x+y)\big)$.
\end{example}

Figure~\ref{example} demonstrates the improvement of the CIP-FEM with $\gamma=\gamma_0$ compared with the FEM scheme intuitively. As it shown in the figure, By tuning the penalty parameter, the CIP-FEM can indeed significantly reduce the pollution error of the FEM.
\begin{figure}
	\centering
	\includegraphics[scale=0.7,trim={2cm 0cm 2cm 0cm}]{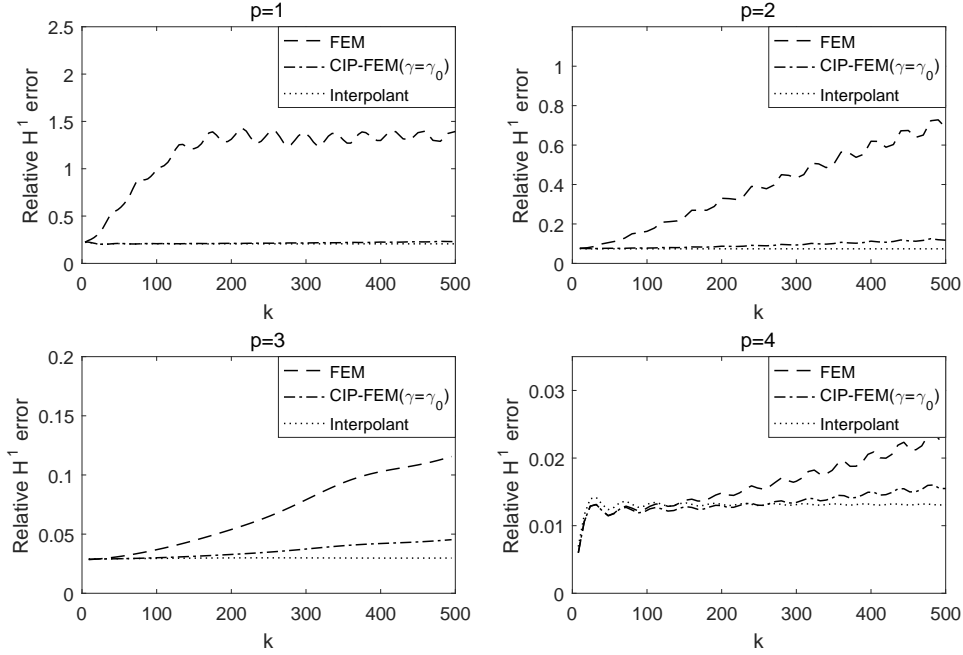}	
	\caption{Example~\ref{Ex2}: The relative errors of the FE solutions, the CIP-FE solutions ($\gamma=\gamma_0$), with mesh size $h$ determined by $\frac{kh}{p}=1$ for $p=1,2,3,4$ and $k=4,8,\cdots, 500$.\label{example}}
\end{figure}

Next we investigate the orders of the pollution errors. Due to the limitation of the computer, for this two-dimensional problem, we can only calculate the solutions on a mesh with mesh size as small as $h\approx 0.001$. Similar simulations as Figures~\ref{loglogh}and \ref{loglogk} in one-dimension can not show obvious convergence orders. We adopt the concept of ``critical mesh size" \cite{cip3} to verify the convergence orders of the pollution errors for the two-dimensional numerical example. 
\begin{definition}[Critical Mesh Size]
	Given a relative tolerance $\varepsilon$, a wave number $k$ and the degree of approximation space $p$, the critical mesh size $h(k, p, \varepsilon)$ with respect to the relative tolerance $\varepsilon$ is defined by the maximum mesh size such that the
	relative $H^1$ error of the (CIP-)FE solution is less than or equal to $\varepsilon$.
\end{definition}

Clearly, the critical mesh size is achieved in the preasymptotic regime and if the pollution error is $\oo(k^{m+1}h^m)$ for some positive integer $m$, then  $h(k, p, \varepsilon)=\varepsilon^\frac{1}{m}O(k^{-\frac{m+1}{m}})$.  Figure~\ref{criticalh} draws the log-log plot of the critical mesh sizes $h(k, p, \varepsilon)$ \cite{cip3} versus $k$ for FEM and CIP-FEM with $\gamma=\gamma_0$ for $p=1,2,3,4$ ($\varepsilon=0.5$ for $p=1,2$ and $\varepsilon=0.1$ for $p=3,4$). It is shown that the critical mesh sizes are of order   $k^{-\frac{2p+1}{2p}}$ for the FEM and $k^{-\frac{2p+3}{2p+1}}$ for the CIP-FEM with $\gamma=\gamma_0$, respectively, which supports the error estimates in \eqref{errFEM} and \eqref{errCIP}. 
\begin{figure}
	\centering
	\includegraphics[scale=0.7,trim={2cm 0cm 2cm 0cm}]{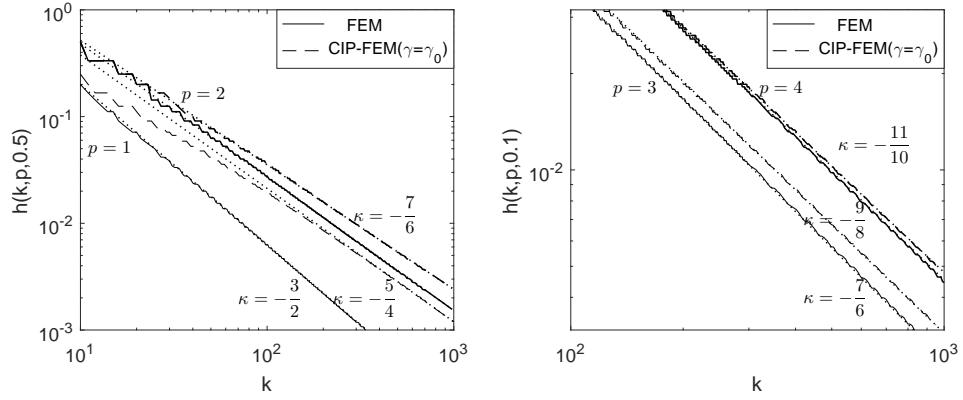}	
	\caption{Example~\ref{Ex2}: The log-log plot of the critical mesh size \cite{cip3} $h(k, p, \varepsilon)(p=1,2,3,4)$ versus $k$ for FEM and CIP-FEM with $\gamma=\gamma_0$ on 2D tensor product meshes.}
	\label{criticalh}
\end{figure}

\appendix
\section{Appendix} 

\subsection{Proof of Lemma~\ref{pepo}}\label{A1}
\begin{proof}
	Following \cite[\S 4.1]{MA1},
	define 
	\begin{align*}
		\hat{B}_\tau (u,v)&=\int_{-1}^1(u'v'-\tau^2uv)\D s,\\
		\pe^\tau(s)\;&=\frac{\sum_{j=1}^{N_e+1}{(-1)^j \tau^{-2j}{\mathcal L}_{2N_e+1}^{(2j-1)}(s)}}{\sum_{j=1}^{N_e+1}{(-1)}^j\tau^{-2j}{\mathcal L}_{2N_e+1}^{(2j-1)}(1)},\quad \po^\tau(s)\;=\frac{\sum_{j=1}^{N_o}{(-1)^j \tau^{-2j}{\mathcal L}_{2N_o}^{(2j-1)}(s)}}{\sum_{j=1}^{N_o}{(-1)}^j\tau^{-2j}{\mathcal L}_{2N_o}^{(2j-1)}(1)},
	\end{align*}
	where 
	$\mathcal{L}_n(s)$ is the Legendre polynomial of degree $n$ whose $d^{\rm th}$ derivative reads: 
	\begin{align*}
		\mathcal{L}_n^{(d)}(s)&=\frac{1}{2^nn!}\frac{\D ^{n+d}(s^2-1)^n}{\D s^{n+d}}\\
		&=\frac{n!}{2^n(n-d)!}\sum_{m=0}^{n-d}\tbinom{n+d}{m+d}\tbinom{n-d}{m}(s+1)^{n-m-d}(s-1)^m.
	\end{align*} 
	Inserting the equation above into \cite[(4.6)]{MA1} and simplifying, we obtain
	\begin{align*}
		\pe^\tau(s)&=\frac{1}{\sum_{j=1}^{N_e+1}\Big\{(-1)^j\tau^{-2j}{\frac{(2N_e+2j)!}{2^{2j-1}(2N_e+2-2j)!(2j-1)!}}\Big\}}\sum_{j=1}^{N_e+1}\bigg\{
		\frac{(-1)^j\tau^{-2j}(2N_e+1)!}{2^{2N_e+1}(2N_e+2-2j)!}\\
		&\sum_{m=0}^{2N_e+2-2j}\tbinom{2N_e+2j}{m-1+2j}\tbinom{2N_e+2-2j}{m}(s+1)^{2N_e+2-2j-m}(s-1)^m\bigg\},\\
		\po^\tau(s)&=\frac{1}{\sum_{j=1}^{N_o}\Big\{(-1)^j\tau^{-2j}{\frac{(2N_o-1+2j)!}{2^{2j-1}(2N_o+1-2j)!(2j-1)!}}\Big\}}\sum_{j=1}^{N_o}\bigg\{
		\frac{(-1)^j\tau^{-2j}(2N_o)!}{2^{2N_o}(2N_o+1-2j)!}\\
		&\sum_{m=0}^{2N_o+1-2j}\tbinom{2N_o-1+2j}{m-1+2j}\tbinom{2N_o+1-2j}{m}(s+1)^{2N_o+1-2j-m}(s-1)^m\bigg\}.
	\end{align*}
	
	It is obvious that
	$$B_t(u,v)=2\hat{B}_\frac{t}{2}(u,v),$$
	by taking $\tau=\frac{t}{2},x=\frac{s+1}{2}$ and combining with \cite[(4.15)--(4.16)]{MA1}, the proof is thus completed.
\end{proof}

\subsection{Proof of Lemma~\ref{cidi}}\label{A2}
\begin{proof}
	The representation of $c_i$ and $d_i$ could be proved readily from Lemma~\ref{pepo} owing to the uniqueness of $\xi_0$ and $\xi_1$. The remainder of the proof is straightforward.
	\begin{align*}
		A_{j}=\frac 1 2\sum_{i=1}^{p-1}{(-1)}^i \tbinom pi \pe\Big(\frac ip\Big)+\frac{(-1)^j}2\sum_{i=1}^{p-1}{(-1)}^i \tbinom pi \po\Big(\frac ip\Big),\quad i=1,2.
	\end{align*}
	
	If p  is even, inserting \eqref{exppepo} into the second summation of $A_1$, yields
	\begin{align}\label{wo1}
		& \sum_{i=1}^{p-1}{(-1)}^i \tbinom pi \po\Big(\frac ip\Big)\nonumber\\
		=&\frac{1}{\sum_{j=1}^{\frac p2 }\left\{(-1)^jt^{-2j}\frac{2(p-1+2j)!}{(p+1-2j)!(2j-1)!}\right\}}\sum_{j=1}^{\frac p2 }\bigg\{\frac{{(-1)}^jt^{-2j}2p!}{(p+1-2j)!p^{p+1-2j}}\nonumber\\
		&\sum_{i=1}^{p-1}{(-1)}^i \tbinom pi
		\sum_{m=0}^{p+1-2j} \tbinom{p-1+2j}{m-1+2j} \tbinom{p+1-2j}{m}i^{p+1-2j-m}(i-p)^m\bigg\}\nonumber\\
		=:&\frac{1}{\sum_{j=1}^{\frac p2 }\left\{(-1)^jt^{-2j}\frac{2(p-1+2j)!}{(p+1-2j)!(2j-1)!}\right\}}\sum_{j=1}^{\frac p2 }\bigg\{\frac{{(-1)}^jt^{-2j}2p!}{(p+1-2j)!p^{p+1-2j}}E_1\bigg\}.
	\end{align}
	
	By making the substitution of $i\rightarrow p-i,m\rightarrow p+1-2j-m$ in $E_1$, we have
	\begin{align*}
		E_1:=&\sum_{i=1}^{p-1}\sum_{m=0}^{p+1-2j}{(-1)}^i \tbinom pi \tbinom{p-1+2j}{m-1+2j} \tbinom{p+1-2j}{m}i^{p+1-2j-m}(i-p)^m\\
		=&\sum_{i=1}^{p-1}\sum_{m=0}^{p+1-2j}{(-1)}^{p-i} \tbinom p{p-i} \tbinom{p-1+2j}{p-m} \tbinom{p+1-2j}{p+1-2j-m}(p-i)^m(-i)^{p+1-2j-m}\\
		=&(-1)^{2p+1-2j}\sum_{i=1}^{p-1}\sum_{m=0}^{p+1-2j}{(-1)}^i \tbinom pi \tbinom{p-1+2j}{m-1+2j} \tbinom{p+1-2j}{m}(i-p)^mi^{p+1-2j-m}=-E_1.
	\end{align*}
	thus the second terms of $A_1$ and $A_2$ vanishes, we then arrive at
	\begin{align*}
		A_1=A_2=&\frac{1}{2}\sum_{i=1}^{p-1}{(-1)}^i \tbinom pi \pe\Big(\frac ip\Big)\\
		=&\frac{1}{\sum_{j=1}^{\frac p2+1}{(-1)}^jt^{-2j}\frac{4(p+2j)!}{(p+2-2j)!(2j-1)!}}\sum_{j=1}^{\frac p2+1}\bigg\{{(-1)}^jt^{-2j}
		\frac{2(p+1)!}{(p+2-2j)!p^{p+2-2j}}\vspace{1mm}\\
		&\sum_{i=1}^{p-1}{(-1)}^i \tbinom pi\sum_{m=0}^{p+2-2j} \tbinom{p+2j}{m-1+2j} \tbinom{p+2-2j}{m}
		i^{p+2-2j-m}(i-p)^m\bigg\}.
	\end{align*}
	
	If p  is odd, follow a similar argument, the details of which are omitted here, we obtain
	\begin{align*}
		A_1=-A_2=&\frac{1}{2}\sum_{i=1}^{p-1}{(-1)}^i \tbinom pi \po\Big(\frac ip\Big)\\
		=&\frac{-1}{\sum_{j=1}^{\frac{p+1}{2}}{(-1)}^jt^{-2j}\frac{4(p+2j)!}{(p+2-2j)!(2j-1)!}}\sum_{j=1}^{\frac{p+1}{2}}\bigg\{{(-1)}^jt^{-2j}
		\frac{2(p+1)!}{(p+2-2j)!p^{p+2-2j}}\vspace{1mm}\\
		&\sum_{i=1}^{p-1}{(-1)}^i \tbinom pi\sum_{m=0}^{p+2-2j} \tbinom{p+2j}{m-1+2j} \tbinom{p+2-2j}{m}
		i^{p+2-2j-m}(i-p)^m\bigg\}.
	\end{align*}
	This completes the proof of the lemma. 
\end{proof}
\subsection{Proof of Lemma~\ref{combination}}\label{A3}
\begin{proof}
	We will use $(0.154)_3$,$(0.154)_4$,$(0.156)_1$ in \cite{table} to complete the proof. 
	Set $0^0\equiv 1,\tbinom 00\equiv 1$. We first prove the equation below
	\begin{align}\label{E}
		E_2:=&\sum_{m=0}^p\tbinom{p+2}{m+1}\tbinom pm\nonumber\\
		=&\sum_{m=0}^p\tbinom{p+1}{m}\tbinom pm+\sum_{m=0}^p\tbinom{p+1}{m+1}\tbinom pm\nonumber\\
		=&\sum_{m=0}^p\tbinom{p+1}{m}\tbinom pm+\sum_{m=0}^p\tbinom{p+1}{p-m}\tbinom p{p-m}\nonumber\\
		=&\sum_{m=0}^p\tbinom{p+1}{m}\tbinom pm+\sum_{m=0}^p\tbinom{p+1}{m}\tbinom pm\nonumber\\
		=&2\sum_{m=0}^p\tbinom{p+1}{m}\tbinom p{p-m}
		\stackrel{(0.156)_1}{=}2\tbinom{2p+1}p.
	\end{align}
	
	By exchanging the order of summation and applying the binomial expansion to $\mathcal N$, we have
	\begin{align}\label{N}
		\mathcal N \! =\! &\!\sum_{i=0}^{p}{(-1)}^i\tbinom{p}{i}\sum_{m=0}^{p+2-2j}\tbinom{p+2j}{m-1+2j}\tbinom{p+2-2j}{m}
		i^{p+2-2j-m}{(i-p)}^m-2(-1)^p\tbinom{p+2j}{2j-1}p^{p+2-2j}\nonumber\\
		\! =\!&\!\sum_{m=0}^{p+2-2j}\!\tbinom{p+2j}{m-1+2j}\tbinom{p+2-2j}{m}\!\sum_{i=0}^{p}{(\!-\!1)}^i\tbinom{p}{i}
		i^{p+2-2j-m}\sum_{l=0}^m \tbinom ml i^l {(\!-\! p)}^{m-l}\!-\!2(\!-\!1)^p\tbinom{p+2j}{2j-1}p^{p+2-2j}\nonumber\\
		\! =\!&\!\sum_{m=0}^{p+2-2j}\tbinom{p+2j}{m-1+2j}\tbinom{p+2-2j}{m}T_{m,j}-2(-1)^p\tbinom{p+2j}{2j-1}p^{p+2-2j}
	\end{align}
	where
	\begin{align*}
		T_{m,j}=&\sum_{l=0}^m \tbinom ml {(-p)}^{m-l}\sum_{i=0}^{p}{(-1)}^i\tbinom{p}{i}
		i^{p+2-2j-m+l}
	\end{align*}
	for $m=0,j=1$,
	\begin{align*}
		T_{0,1}=\sum_{i=0}^{p}{(-1)}^i\tbinom{p}{i}
		i^p\stackrel{(0.154)_4}{=}(-1)^pp!
	\end{align*}
	for $1\leq m\leq p,j=1$, if $0\leq l\leq m-1$, we have $p-m+l\leq p-1$, thus
	\begin{align*}
		T_{m,1}=&\sum_{l=0}^{m-1} \tbinom ml {(-p)}^{m-l}\sum_{i=0}^{p}{(-1)}^i\tbinom{p}{i}
		i^{p-m+l}+\sum_{i=0}^{p}{(-1)}^i\tbinom{p}{i}
		i^p\\
		=&\sum_{i=0}^{p}{(-1)}^i\tbinom{p}{i}
		i^p=(-1)^pp!
	\end{align*}
	for $2\leq j\leq \lfloor{\frac p 2}\rfloor+1$,  $p+2-2j-m+l\leq p+2-2j\leq p-2$, hence
	$$T_{m,j}\stackrel{(0.154)_3}{=}0$$
	
	Combining with \eqref{E} and \eqref{N}, we obtain
	\begin{equation*}\mathcal N=\left\{
		\begin{array}{l}
			(-1)^pp!E_2-2(-1)^p(p+2)p^p=2{(-1)}^p \left(\frac{(2p+1)!}{(p+1)!}-(p+2)p^p\right), j=1\\
			2{(-1)}^{p+1}\tbinom{p+2j}{2j-1}p^{p+2-2j},\hskip 119pt 2\leq j\leq \lfloor{\frac p 2}\rfloor+1.
		\end{array}\right.
	\end{equation*}
	The proof is completed. 
\end{proof}

\subsection{Proof of Lemma~\ref{2deq}}\label{A4}
\begin{proof}
	For two-dimensional problem, $\{\xx_g\}=\{\xx_{0,0},\xx_{0,1},\cdots,\xx_{p-1,p-1}\}$ is a generating set of the mesh (see Figure~\ref{2dgs}).
	
re	Let $\phi_{i,j}=\phi_i(x)\phi_j(y)$ be the nodal basis function of the two-dimensional $\mathbb{Q}^p$ finite element space  at $\xx_{i,j}$, where $\phi_i=\lambda_i(\frac{x}{h})$ is the one-dimensional nodal basis function at  $x_i$. Similar to \eqref{em1}, we have for $ 0\le i_1,i_2,j_1,j_2 \le p$,
	\begin{align}\label{em2d1}
		&\int_0^h\int_0^h(\nabla\phi_{i_1,j_1}\cdot \nabla\phi_{i_2,j_2}-k^2\phi_{i_1,j_1}\phi_{i_2,j_2})\dd\xx\notag\\
		=&\int_0^h\int_0^h\Big(\big(\phi_{i_1}'(x)\phi_{i_2}'(x)-k^2\phi_{i_1}(x)\phi_{i_2}(x)\big)\phi_{j_1}(y)\phi_{j_2}(y)\notag\\
		&+\phi_{i_1}(x)\phi_{i_2}(x)\big(\phi_{j_1}'(y)\phi_{j_2}'(y)-k^2\phi_{j_1}(y)\phi_{j_2}(y)\big)\notag\\
		&+k^2\phi_{i_1}(x)\phi_{i_2}(x)\phi_{j_1}(y)\phi_{j_2}(y)\Big){\rm d} x{\rm d} y\notag\\
		=&B_t(\lambda_{i_1},\lambda_{i_2})\int_0^1\lambda_{j_1}\lambda_{j_2}{\rm d} x \! +\! B_t(\lambda_{j_1},\lambda_{j_2})\int_0^1\lambda_{i_1}\lambda_{i_2}{\rm d} x
		\!+\! t^2\int_0^1\lambda_{i_1}\lambda_{i_2}{\rm d} x\int_0^1\lambda_{j_1}\lambda_{j_2}{\rm d} x.
	\end{align}
	
	The lemma may be proved by writing equations at the nodal points $\xx_{0,0},\xx_{0,1},\cdots,$ $\xx_{p-1,p-1}$ and using \eqref{em2d1},\eqref{em2}, and  Lemma~\ref{1deq} to simplify the coefficients. The calculations are basic but quite tedious.  We take the $(1,1)^{\rm th}$ entry of $\DD$ as an example and omit the derivations of other entries.  Clearly, $\DD_{1,1}$ is the coefficient of $U_{0,0}$ in the equation at $\xx_{0,0}$, which is expressed as follows:
	\begin{align*}
		\DD_{1,1}=&\Aga(\phi_{0,0},\phi_{0,0})+2\cos(t_{h1})\Aga(\phi_{p,0},\phi_{0,0})+2\cos(t_{h2})\Aga(\phi_{0,p},\phi_{0,0})\\
		&+4\cos(t_{h1})\cos(t_{h2})\Aga(\phi_{p,p},\phi_{0,0})\\
		&+2\cos(2t_{h1})\Aga(\phi_{2p,0},\phi_{0,0})+2\cos(2t_{h2})\Aga(\phi_{0,2p},\phi_{0,0})\\
		&+4\cos(t_{h1})\cos(2t_{h2})\Aga(\phi_{p,2p},\phi_{0,0})+4\cos(2t_{h1})\cos(t_{h2})\Aga(\phi_{2p,p},\phi_{0,0}).
	\end{align*}
	From \eqref{em2d1},\eqref{em2}, and  Lemma~\ref{1deq}, we conclude that
	\begin{align*}
		\DD_{1,1}=&\Big[2B_{t_1}(\lambda_0,\lambda_0)+2\cos(t_{h1})B_{t_1}(\lambda_p,\lambda_0)\\
		&+\Big(2+(1-(-1)^p)^2-2(1-(-1)^p)^2\cos(t_{h1})-2(-1)^p\cos(2t_{h1})\Big)p^{2p}\gamma\Big]\\
		&\times\bigg(2\int_0^1\lambda_0^2+2\cos(t_{h2})\int_0^1\lambda_0\lambda_p\bigg)+\bigg(2\int_0^1\lambda_0^2+2\cos(t_{h1})\int_0^1\lambda_0\lambda_p\bigg)\\
		&\times\Big[2B_{t_2}(\lambda_0,\lambda_0)+2\cos(t_{h2})B_{t_2}(\lambda_p,\lambda_0)\\
		&+\Big(2+(1-(-1)^p)^2-2(1-(-1)^p)^2\cos(t_{h2})-2(-1)^p\cos(2t_{h2})\Big)p^{2p}\gamma\Big]\\
		=&\mathcal{D}_{1,1}^{t_1,t_{h1}}\otimes\mathcal{M}_{1,1}^{t_{h2}}+\mathcal{M}_{1,1}^{t_{h1}}\otimes\mathcal{D}_{1,1}^{t_2,t_{h2}}.
	\end{align*}
	This completes the proof.
\end{proof}
\subsection{Proof of Lemma~\ref{Mij}}\label{A5}
\begin{proof}
	We first prove the following estimates for a fixed $x$:
	\begin{align}\label{corollary}
		\pe(x)=1+\oo(t^2),\quad\po(x)=(2x-1)+\oo(t^2),\quad\text{as } t\rightarrow 0.
	\end{align}
	
	The proof of \eqref{corollary} relies on the following fact whose proof is trival and we omit it here,
	$$\frac{\sum_{i=1}^{m}a_i t^{-2i}}{\sum_{i=1}^{n}b_i t^{-2i}}=\frac{a_m}{b_n}t^{2n-2m}+\oo(t^{2n-2m+2}),\quad \text{ if }a_m,b_n\neq 0,m\leq n.$$
	thus an easy induction gives
	\begin{align*}
		\pe(x)&=\frac{(2N_e+1)!\tbinom{4N_e+2}{2N_e+1}}{(4N_e+2)!/(2N_e+1)!}+\oo(t^2)=1+\oo(t^2),\\
		\po(x)&=\frac{(2N_o)!\left(\tbinom{4N_o-1}{2N_o}(x-1)+\tbinom{4N_o-1}{2N_o-1}x\right)}{(4N_o-1)!/(2N_o-1)!}+\oo(t^2)=2x-1+\oo(t^2).
	\end{align*}
	
	According to Lemma~\ref{cidi} and \eqref{corollary}, we have
	$$c_i,d_i=\oo(1),\quad c_i+d_i=\pe(i/p)=1+\oo(t^2).$$ 
	
	The remainder of this proof is quite straightforward.
	\begin{align*}
		\tilde{\mathcal{M}}_{1,j+1}^\beta=&{\mathcal{M}}_{1,j+1}^\beta+\sum_{i=1}^{p-1}\big(c_i+d_i+(e^{-\ii \beta}-1)d_i\big)\mathcal{M}_{i+1,j+1}^\beta\\
		=&\int_0^1\lambda_j\lambda_0\D x+(1+\oo(\beta))\int_0^1\lambda_j\lambda_p\D x+\sum_{i=1}^{p-1}\big(1+o(1)\big)\int_0^1\lambda_j\lambda_i\D x\\
		=&\int_0^1\lambda_j\sum_{i=0}^{p}\lambda_i\D x+o(1)=\int_0^1\lambda_j\D x+o(1),\\
		\tilde{\mathcal{M}}_{1,1}^\beta\;=&{\mathcal{M}}_{1,1}^\beta+\sum_{i=1}^{p-1}(c_i+e^{-\ii \beta}d_i)\mathcal{M}_{i+1,1}^\beta+\sum_{j=1}^{p-1}(c_j+e^{\ii \beta}d_j)\tilde{\mathcal{M}}_{1,j+1}^\beta\\
		=&\int_0^1\lambda_0\Big(\lambda_0+\sum_{i=1}^{p-1}c_i\lambda_i\Big)\D x+e^{-\ii \beta}\int_0^1\lambda_0\Big(\lambda_p+\sum_{i=1}^{p-1}d_i\lambda_i\Big)\D x\\
		&+\int_0^1\lambda_p\Big(\lambda_p+\sum_{i=1}^{p-1}d_i\lambda_i\Big)\D x+e^{\ii \beta}\int_0^1\lambda_p\Big(\lambda_0+\sum_{i=1}^{p-1}c_i\lambda_i\Big)\D x\\
		&+\sum_{j=1}^{p-1}(c_j+e^{\ii \beta}d_j)\Big(\int_0^1\lambda_j\D x+o(1)\Big)\\
		=&\int_0^1(\lambda_0+\lambda_p)\Big(\lambda_0+\lambda_p+\sum_{i=1}^{p-1}(c_i+d_i)\lambda_i\Big)\D x\\
		&+\sum_{j=1}^{p-1}\big(1+o(1)\big)\Big(\int_0^1\lambda_j\D x+o(1)\Big)+\oo(\beta)\\
		=&\int_0^1(\lambda_0+\lambda_p)\big(1+o(1)\big)\D x+\sum_{j=1}^{p-1}\int_0^1\lambda_j\D x+o(1)=1+o(1).
	\end{align*}
	This completes the proof of the lemma.
\end{proof}

\subsection{Proof of Lemma~\ref{2dpositive}}\label{A6}
\begin{proof}
	Similar to the proof of Lemma~\ref{positive}, for any $\bm{v}=(v_0,\cdots,v_{p-1})^T$ $\in\mathbb{R}^p$, let
	$v=v_0+\sum_{i=1}^{p-1}v_i\lambda_i$, 
	we have 
	\begin{align*}
		\bm{v}^TM_1\bm{v}&=v_0^2+2\sum_{i=1}^{p-1}\int_0^1v_0v_i\lambda_i\D x+\sum_{i,j=1}^{p-1}\int_0^1v_i\lambda_iv_j\lambda_j\D x=\|v\|_{L_2(0,1)}^2\\
		&\gtrsim v_0^2+(v_0+v_1)^2+\cdots+(v_0+v_{p-1})^2+v_0^2\gtrsim |\bm v|^2, 
	\end{align*}
	where we have used the  inequality $(v_0+v_j)^2\ge \frac{1}{p+1} v_j^2-\frac1p v_0^2$ with   $\varepsilon=\frac{p}{p+1}$ to derive the last inequality. The above estimate says that $M_1$ is symmetric and positive definite (SPD).
	
	Since $D_0$ is SPD (see Lemma~\ref{positive}), there exists a non-singular $(p-1)\times(p-1)$ matrix $X$, such that 
	\begin{align*}
		XD_0 X^T=\mathbf{I}_{p-1},
	\end{align*}
	where $\mathbf{I}_{p-1}$ is the $(p-1)\times(p-1)$ identical matrix.
	Introducing a non-singular matrix
	\begin{align*}
		Y=\left(
		\begin{array}{cc}
			1&\;\bm 0^T\\
			\bm 0&X
		\end{array}	\right),
	\end{align*}
	Properties (a-c) of the Kronecker matrix product  implies that
	\begin{align*}
		(Y\otimes Y)\hat{D}(Y\otimes Y)^T=\left(
		\begin{array}{cc}
			0 & \;\bm 0^T \\
			\bm 0 & \quad\mathbf{I}_{p-1}
		\end{array}	\right)
		\otimes YM_1Y^T+
		YM_1Y^T\otimes\left(
		\begin{array}{cc}
			0 & \;\bm 0^T  \\
			\bm 0 & \quad\mathbf{I}_{p-1}
		\end{array}	\right).
	\end{align*}
	Noting that $YM_1Y^T$ is also SPD,  there exists an orthogonal transformation $W$, such that
	\begin{align*}
		W(YM_1Y^T)W^T=\left(
		\begin{array}{cccc}
			m_1 &  \cdots & 0\\
			\vdots &\ddots&\vdots\\
			0 &  \cdots & m_p
		\end{array}	\right):=M_0,
	\end{align*}
	where the eigenvalues $\{m_i\}_{1\leq i\leq p}$ are all positive real numbers.
	\begin{align*}
		\check{D}=&(W\otimes W)(Y\otimes Y)\hat{D}(Y\otimes Y)^T(W\otimes W)^T\\
		=&\left(
		\begin{array}{cc}
			0 & \quad\bm 0^T \\
			\bm 0 & \quad\mathbf{I}_{p-1}
		\end{array}	\right)
		\otimes M_0
		+ M_0
		\otimes\left(
		\begin{array}{cc}
			0 & \quad\bm 0^T \\
			\bm 0 & \quad\mathbf{I}_{p-1}
		\end{array}	\right).
	\end{align*}
	$\check{D}$ is a diagonal matrix with the following entries on its diagonal line: 
	$$0, m_2, \cdots, m_p, m_1, m_1+m_2,  \cdots, m_1+m_p, \cdots, m_p, m_p+m_2, \cdots, m_p+m_p.$$
	Thus $\check{D}$ has only one zero eigenvalue and $(p^2-1)$ positive 
	ones.
	
	Noting that the first row and the first column of $\hat{D}$ are all zeros, 
	all the eigenvalues of the sub-matrix obtained by removing the first row and the first column of $\hat{D}$  are positive, which leads to $\hat{D}^*_{1,1}\neq 0$.
\end{proof}



\end{document}